\newtheorem{thm}{Theorem}[section]
\newtheorem{lem}[thm]{Lemma}
\newtheorem{cor}[thm]{Corollary}
\newtheorem*{thmHL}{Theorem \ref{thm.HL1theta}}
\newtheorem*{thm1.5}{Theorem \ref{thm.1beta2}}
\newtheorem*{thm1.6}{Theorem \ref{nsbpr->negHL}}
\newtheorem*{thm1.7}{Theorem \ref{thm.DiamondimpliesNotHL}}
\newtheorem*{cor1.8}{Theorem \ref{cor.L}}
\newtheorem{claim}{Claim}
\newtheorem{fact}[thm]{Fact}
\theoremstyle{remark}
\newtheorem{rem}[thm]{Remark}
\theoremstyle{definition}
\newtheorem{defn}[thm]{Definition}
\newtheorem{notation}[thm]{Notation}
\newtheorem{question}[thm]{Question}
\theoremstyle{remark}
\newcommand{\utilde}{\widetilde}
\newcommand{\al}{\alpha}
\newcommand{\om}{\omega}
\newcommand{\ve}{\varepsilon}
\newcommand{\sse}{\subseteq}
\newcommand{\forces}{\Vdash}
\newcommand{\bm}{\mathbf{m}}
\newcommand{\bn}{\mathbf{n}}
\DeclareMathOperator{\ran}{ran}
\DeclareMathOperator{\dom}{dom}
\DeclareMathOperator{\stem}{stem}
\DeclareMathOperator{\lh}{lg}
\DeclareMathOperator{\ot}{o.t.}
\DeclareMathOperator{\cf}{cf}
\DeclareMathOperator{\PR}{Pr}
\DeclareMathOperator{\HL}{HL}
\DeclareMathOperator{\OBep}{{OB}_{\varepsilon}}
\DeclareMathOperator{\OB}{OB}
\newcommand{\re}{\restriction}
\newcommand{\bP}{\mathbb{P}}
\newcommand{\bQ}{\mathbb{Q}}
\newcommand{\ra}{\rightarrow}
\newcommand{\Lra}{\Longrightarrow}
\newcommand{\Llra}{\Longleftrightarrow}
\newcommand{\lgl}{\langle}
\newcommand{\rgl}{\rangle}
\newcommand{\kgtwo}{{}^{\kappa>}2}
\newcommand{\naltwo}{{}^n({}^{\al}2)}
\newcommand{\trngl}{\trianglelefteqslant}
\newcommand{\utcD}{\utilde{\mathcal{D}}}
\newcommand{\Erdos}{Erd{\H{o}}s}
\newcommand{\Lauchli}{L{\"{a}}uchli}
\newcommand{\noprint}[1]{\relax}
\begin{document}

\title[Halpern--L\"{a}uchli at singular  cardinals]{The Halpern--L\"{a}uchli Theorem at singular cardinals and failures of weak versions}

\author{Natasha Dobrinen}
\address{University of  Notre Dame\\
Department of Mathematics, 
255 Hurley Bldg, Notre Dame, IN 46556 USA}
\email{ndobrine@nd.edu}

 \thanks{This work was partially supported by 
  National Science Foundation Grant  DMS 1833363.  This is paper 1230 on Shelah's list.}

\author{Saharon Shelah}
\address{The Hebrew University of Jerusalem\\
Einstein Institute of Mathematics, 
Edmond J.\ Safra Campus, Givat Ram, Jerusalem, 91904, Israel, and 
Department of Mathematics, Hill Center - Busch Campus, Rutgers, The State University of New Jersey, 110 Frelinghuysen Road, Piscataway, NJ 08854 USA}
\email{shelah@math.huji.ac.il}

\keywords{tree, Ramsey theory, singular cardinal}

\subjclass[2020]{03E02,  03E05, 03E10, 03E35,  05D10}

\maketitle

\begin{abstract}
This paper continues a line of investigation  of  the Halpern--\Lauchli\ Theorem at uncountable cardinals. 
We prove in ZFC that the Halpern--L\"{a}uchli Theorem for one tree of height $\kappa$ holds whenever $\kappa$ is 
 strongly inaccessible  and the coloring takes less  than $\kappa$ colors. 
We  prove consistency of  the  Halpern--L\"{a}uchli Theorem
for  finitely many  trees of  height $\kappa$, 
where $\kappa$  is
 a strong limit cardinal of countable cofinality.
On the other hand, we
prove  failure of   weak forms of 
Halpern--\Lauchli\  for trees of height $\kappa$, whenever 
 $\kappa$  is a
  strongly 
 inaccessible, non-Mahlo cardinal or  a
 singular strong limit cardinal with cofinality the successor of a regular cardinal. 
We also prove
 failure in $L$ of  a weak version for all  strongly inaccessible, non-weakly compact cardinals.
\end{abstract}


\section{Introduction}

Investigations of the Halpern--\Lauchli\ Theorem on trees of uncountable height commenced with work of the second author in \cite{Shelah91}.
In that paper, Shelah
built on a forcing proof due to  Harrington  for trees of height $\om$ to
show the consistency of a strong version of the  Halpern--\Lauchli\ Theorem for 
 trees of height $\kappa$, where $\kappa$ is measurable in certain forcing extensions. 
A slightly modified version of this theorem was applied by D\v{z}amonja, Larson, and Mitchell 
to characterize  the big Ramsey degrees for the $\kappa$-rationals 
in \cite{Dzamonja/Larson/MitchellQ09}
and the $\kappa$-Rado graph in 
\cite{Dzamonja/Larson/MitchellRado09}, for such  $\kappa$. 
More recently, consistency strengths of various versions of the Halpern--\Lauchli\ Theorem at uncountable cardinals  were investigated in \cite{Dobrinen/Hathaway16}, \cite{Dobrinen/Hathaway18}, and 
\cite{Zhang17}.  
This 
line of investigation is continued in this article.

Let $\kappa$ be an ordinal.
For nodes $\eta,\nu\in  {}^{\kappa>}2$,
we  write
$\eta\trngl\nu$   when 
 $\eta$ is an initial segment of $\nu$,
and  write  $\eta\lhd \nu$  when $\eta$ is a proper initial segment of $\nu$. 
The {\em length} of $\eta$, 
denoted by 
$\lg(\eta)$, 
is  the ordinal $\al$ such that $\eta\in {}^{\al}2$.
A subset $T\sse {}^{\kappa>}2$ is a {\em subtree} if $T$ is non-empty and  closed under initial segments.
Similarly to \cite{Dobrinen/Hathaway16}, 
 we call a subtree $T\sse {}^{\kappa>}2$   {\em regular} if  the following hold:
\begin{enumerate}
\item
For all $\eta\in T$ and  $\al<\kappa$, there is a $\nu\in T$ such that $\eta\trngl \nu$ and $\lg(\nu)=\max\{\lg(\eta),\al\}$;
\item
If
$\delta<\kappa$ is a limit ordinal and
$\eta\in{}^{\delta}2$ has the property that  $\eta\re\al\in T$ for all $\al<\delta$, 
then $\eta\in T$.
\end{enumerate}

The following is the  strong-tree version of the Halpern--\Lauchli\ Theorem  for finitely many trees of  height $\kappa$.

\begin{defn}\label{def.HL}
\begin{enumerate}
\item
(HL$_{n,\theta}(\kappa)$)
For finite $1\le n<\om$ and $2\le\theta$, we write  HL$_{n,\theta}(\kappa)$  to
denote that 
for any $c:\bigcup\{ {}^n({}^{\al}2):\al<\kappa\}\ra \theta$,
there are  $A, T_0,\dots,T_{n-1}$ satisfying  the following:
\begin{enumerate}
\item[(a)]
$A\in[\kappa]^{\kappa}$.
\item[(b)]
$T_{\ell}$ is a regular subtree of $\kgtwo$, for each $\ell<n$.
\item[(c)]
If $\eta\in T_{\ell}$, then
$\{\eta^{\frown}\lgl 0\rgl,\eta^{\frown}\lgl 1\rgl\}\sse T_\ell$ iff  $\lg(\eta)\in A$.
\item[(d)]
$c\re (\bigcup\{\prod_{\ell<n} T_{\ell|\varepsilon}:\varepsilon\in A\})$ is constant, where $T_{\ell|\varepsilon}=\{\eta\in T_\ell:\lg(\eta)=\varepsilon\}$.
\end{enumerate}
\item
A tree $T\sse \kgtwo$ is called a {\em strong tree} if there is an $A\in[\kappa]^{\kappa}$ such that (a)--(c)  hold for $T$.
\end{enumerate}
\end{defn}

Variations of  the Halpern--\Lauchli\  Theorem will also be investigated in this paper.  For their statements, the following notion of suitable triple will be useful.

\begin{defn}\label{defn.suitabletriple}
\begin{enumerate}
\item
A triple $(\kappa,T,A)$ is called  {\em suitable} if the following hold:
\begin{enumerate}
\item[(a)]
 $A\in[\kappa]^{\kappa}$;
 \item[(b)]
 $T$ is a regular subtree of ${}^{\kappa>}2$;
\item[(c)]
If $\eta\in T\cap{}^{\al}2$, then
$\{\eta^{\frown}0,\eta^{\frown}1  \}\sse T$
iff $\al\in A$.
 \end{enumerate}
 \item
 Given $\ell<\om$,  let $(0_{\ell})$ denote the sequence of $0$'s of length $\ell$, and let 
 $\eta^*_\ell=(0_{\ell})^{\frown}1$, the sequence  of length $\ell+1$ where the first $\ell$ coordinates are $0$ and the last coordinate is $1$. 
 \end{enumerate}
\end{defn}

Note that $(\kappa,T,A)$ is a suitable triple  if and only if $T$ is a strong tree with 
$A$ being the set of lengths of nodes in $T$ which branch.



\begin{defn}[Halpern--\Lauchli\ Variations]\label{defn.HLvar}
Let  $1\le n<\om$ and $2\le \theta<\kappa$ be given,  with $\kappa$ strongly inaccessible.
\begin{enumerate}
\item
$\HL_{n,\theta}[\kappa]$ abbreviates the following statement:
Given a coloring
$c:\bigcup\{{}^n({}^{\al}2):\al<\kappa\}\ra \theta$,
then there is a suitable triple $(\kappa,T,A)$ and a color $\theta_*<\theta$ such that
for all
$\al\in A$ and 
$\overline{\nu}=(\nu_0,\dots\nu_{n-1})$ with 
$\eta_\ell^*\trngl \nu_\ell\in T\cap {}^{\al}2$
for each $\ell<n$,
then
$c(\overline{\nu})\ne \theta_*$.

\item
HL$^+_{n,\theta}(\kappa)$ abbreviates the following statement:
Given $\overline{<}=\lgl <_{\al}:\al<\kappa\rgl$, where  $<_{\al}$ is a well-ordering of ${}^{\al}2$,
and given a coloring $c:\bigcup\{[{}^{\al}2]^n:\al<\kappa\}\ra \theta$,
there  is a suitable triple $(\kappa,T,A)$
so that
whenever $\al<\kappa$ 
and $\eta_0,\dots,\eta_{n-1}\in T\cap {}^{\al}2$ are pairwise distinct,
then $c$ is constant on the set
\begin{align*}
\{\overline{\nu}=(\nu_0,\dots\nu_{n-1}):\bigwedge_{\ell<n}\eta_{\ell}\triangleleft \nu_\ell\in T,\ 
&\lg(\nu_0)=\dots=\lg(\nu_{n-1})\in A,\cr
& \ \mathrm{\ and\ }\bar{\nu}\mathrm{\ is\ }
<_{\lg(\nu_0)}\mathrm{-increasing}\}.
\end{align*}

\item
HL$^+_{n,\theta}[\kappa]$ abbreviates the following statement:
Given $\overline{<}=\lgl <_{\al}:\al<\kappa\rgl$, where $<_{\al}$ is a well-ordering of ${}^{\al}2$,
and given a coloring $c:\bigcup\{[{}^{\al}2]^n:\al<\kappa\}\ra \theta$,
there is a suitable triple $(\kappa,T,A)$
so that
whenever $\al<\kappa$  
and $\eta_0,\dots,\eta_{n-1}\in T\cap {}^{\al}2$ are pairwise distinct,
then
$c$ misses at least one color on the set
\begin{align*}
\{\overline{\nu}=(\nu_0,\dots\nu_{n-1}):\bigwedge_{\ell<n}\eta_{\ell}\triangleleft \nu_\ell\in T,\ 
&\lg(\nu_0)=\dots=\lg(\nu_{n-1})\in A, \cr
& \  \mathrm{\ and\ }\bar{\nu}\mathrm{\ is\ }
<_{\lg(\nu_0)}\mathrm{-increasing}\}.
\end{align*}
\end{enumerate}
\end{defn}


We point out the following straightforward facts.

\begin{fact}\label{fact.simple}
\begin{enumerate}
\item
For any given triple $n,\theta,\kappa$,
the following implications hold:
\begin{center}
\begin{tikzpicture}
  \matrix (m) [matrix of math nodes,row sep=3em,column sep=4em,minimum width=2em]
  {
     \HL^+_{n,\theta}(\kappa) & \HL_{n,\theta}(\kappa)\\
     \HL^+_{n,\theta}[\kappa] &  \HL_{n,\theta}[\kappa]\\};
  \path[-stealth]
    (m-1-1) edge (m-2-1)
            edge   (m-1-2)
    (m-2-1.east|-m-2-2) edge 
             (m-2-2)
    (m-1-2) edge  (m-2-2);
\end{tikzpicture}
\end{center}
\item
For $m\le n$ and $2\le \theta\le \theta'$, 
the following implications  hold:
\begin{align*}
\HL_{n,\theta'}(\kappa) &\ \longrightarrow\ \HL_{m,\theta}(\kappa)\cr
\HL_{n,\theta'}^+(\kappa) &\ \longrightarrow\ \HL^+_{m,\theta}(\kappa)\cr
\HL_{n,\theta}^+[\kappa] &\ \longrightarrow\ \HL^+_{m,\theta'}[\kappa]\cr
\HL_{n,\theta}[\kappa] &\ \longrightarrow\ \HL^+_{m,\theta'}[\kappa]
\end{align*}
\item  

The following versions are equal:
\begin{align*}
\HL_{n,2}(\kappa)&=\HL_{n,2}[\kappa]\cr
\HL^+_{n,2}(\kappa)&=\HL^+_{n,2}[\kappa]
\end{align*}
\end{enumerate}
\end{fact}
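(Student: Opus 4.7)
The plan is to verify Fact~\ref{fact.simple} by direct manipulation of the definitions, using four recurring principles: (i) a coloring constantly taking value $c_*$ omits every $\theta_*\ne c_*$, so long as $\theta\ge 2$; (ii) every $\theta$-coloring is automatically a $\theta'$-coloring for $\theta\le\theta'$, and every $\theta'$-coloring collapses to a $\theta$-coloring via a surjection $f\colon\theta'\to\theta$ with nonempty fibers; (iii) tuple and subset colorings on ${}^{\al}2$ are interconvertible using a well-ordering $\overline{<}$; and (iv) a single strong tree can be partitioned into $n$ substrong trees with a common branching set by taking cones above $n$ distinct nodes at a sufficiently high common level.

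For part~(1), the downward arrows from the constancy forms $(\cdot)$ to the omission forms $[\cdot]$ are immediate applications of~(i). For the rightward arrows from $\HL^+$ to $\HL$, given a tuple coloring $c$, I would choose the family $\overline{<}$ compatibly with the tree's extension structure (so that $<_{\al}$-ordered nodes have extensions remaining in the same $<_{\varepsilon}$-order at later levels $\varepsilon\in A$), define the subset coloring $c'(S):=c(\overline{\nu})$ with $\overline{\nu}$ the $<$-increasing enumeration of $S$, apply $\HL^+$ to $c'$, and extract $n$ substrong trees $T_{\ell}$ by taking cones above $n$ distinct nodes $\mu_0<\cdots<\mu_{n-1}$ at a sufficiently high common level of the resulting tree $T$. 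Each product $\prod_{\ell<n}T_{\ell|\varepsilon}$ then consists of $<_{\varepsilon}$-increasing tuples, so the $\HL^+$ constancy on increasing extensions delivers the $\HL$ constancy on products; replacing ``constant'' by ``omits a color'' handles the $[\cdot]$ variant.

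For part~(2), the monotonicity in the arity $m\le n$ is by padding or projection of colorings. The monotonicity in $\theta$ uses~(ii): for the constancy forms, a $\theta$-coloring is already a $\theta'$-coloring and the resulting constant color lies in $\theta$; for the omission forms, a $\theta'$-coloring collapses to a $\theta$-coloring $f\circ c$ and an omitted $\theta_*\in\theta$ in the collapsed coloring pulls back to a nonempty set of omitted colors in the original.

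For part~(3), the forward direction is from part~(1). For the reverse $\HL_{n,2}[\kappa]\Rightarrow\HL_{n,2}(\kappa)$, given a suitable triple $(\kappa,T,A)$ with $\eta^*_\ell\in T$ for all $\ell<n$ and omitted color $\theta_*$ from the $\HL_{n,2}[\kappa]$ hypothesis, pick $\tau_{\ell}\in T\cap{}^n 2$ extending $\eta^*_{\ell}$ (which exists by regularity of $T$), set $A':=A\cap[n,\kappa)$, and define $T_{\ell}$ to consist of the initial segments of $\tau_{\ell}$ together with the $T$-extensions of $\tau_{\ell}$. Each $T_{\ell}$ is then a suitable tree with common branching set $A'$, and $\bigcup_{\varepsilon\in A'}\prod_{\ell<n}T_{\ell|\varepsilon}$ is contained in the set of $\eta^*_{\ell}$-extending tuples on which $c$ omits $\theta_*$; with $\theta=2$ this omission is equivalent to constancy at $1-\theta_*$, giving $\HL_{n,2}(\kappa)$. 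The main bookkeeping obstacle lies in part~(1)'s horizontal implication: one must choose the well-orderings $\overline{<}$ compatibly with the extension structure of the $\HL^+$-tree so that products of the extracted substrong trees are automatically $<_{\varepsilon}$-increasing at all relevant levels.
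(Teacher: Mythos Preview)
The paper does not give a proof of this Fact; it is stated as ``straightforward'' and left to the reader. Your outline is essentially correct and is the intended argument.

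One point deserves more care than you gave it. For the horizontal implication $\HL^+_{n,\theta}[\kappa]\Rightarrow\HL_{n,\theta}[\kappa]$, the target statement is not about $n$ separate trees but about a \emph{single} suitable triple $(\kappa,T',A')$ in which the fixed nodes $\eta^*_\ell$ play a role: the omitted color must be missed on all tuples $\bar\nu$ with $\eta^*_\ell\trngl\nu_\ell\in T'\cap{}^{\al}2$. Your sketch (``extract $n$ substrong trees $T_\ell$ \ldots\ replacing `constant' by `omits a color' handles the $[\cdot]$ variant'') produces $n$ trees but does not say how to assemble them into a single tree containing extensions of each $\eta^*_\ell$. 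Moreover, under the lexicographic order the nodes $\eta^*_\ell$ are in \emph{decreasing} lex order (since $\eta^*_0=\lgl 1\rgl$ is lex-largest), so if you set $c'(S)=c(\text{lex-increasing enumeration})$ and then pick $\mu_\ell$ extending $\eta^*_\ell$, the tuple $(\mu_0,\dots,\mu_{n-1})$ is lex-decreasing and the relevant set of $<$-increasing extensions is empty. The fix is minor: either define $c'$ via the lex-\emph{decreasing} enumeration, or choose $<_\al$ so that extensions of $\eta^*_i$ precede extensions of $\eta^*_j$ whenever $i<j$ (e.g.\ order first by which $\eta^*_\ell$ is extended, then by lex). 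With that adjustment your cone-extraction argument goes through, taking $T'$ to be the downward closure of $\bigcup_\ell T_\ell$.

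You already spotted the companion issue in part~(3): one must arrange that the suitable tree coming out of the $[\kappa]$ hypothesis actually contains extensions of each $\eta^*_\ell$, since otherwise the hypothesis is vacuous and nothing can be concluded. This is really a wrinkle in the paper's definition rather than a gap in your argument, and your handling (assume $\eta^*_\ell\in T$, pass to cones above extensions $\tau_\ell$, take $A'=A\setminus n$) is the right move; just note that the same care is needed in part~(1).
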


We briefly review some highlights from previous work. 
Shelah proved in \cite{Shelah91} that 
$\HL^+_{n,\theta}(\kappa)$ holds
for all $1\le n<\om$ and $2\le \theta<\kappa$
whenever $\kappa$ is a cardinal  
with the following property $(*)$:  $\kappa$ 
 is measurable  after forcing with Cohen$(\kappa,\lambda)$, where  
$\lambda\ra(\kappa^+)^{2n}_{2^{\kappa}}$.
If $\kappa$ is a $\kappa+2n$-strong cardinal,
 then $(*)$ is satisfied by 
$\lambda=(\beth_{2n}(\kappa))^+$.
Utilizing  a lemma from \cite{Shelah91}, Zhang proved in  \cite{Zhang17}
a ``tail-cone'' version which is  intermediate between $\HL^+_{n,\theta}(\kappa)$ and $\HL_{n,\theta}(\kappa)$.
He  then  applied the tail-cone version to obtain a polarized partition relation for finite products of $\kappa$-rationals, for $\kappa$ satisfying $(*)$, 
proving  an analogue of Laver's result for finite products of rationals in \cite{Laver84}.

In 
\cite{Dobrinen/Hathaway16}, the first author and Hathaway   proved that $\HL_{1,n}(\kappa)$ holds for any $2\le n<\om$ when  $\kappa$ is  strongly inaccessible.
Soon after,
Zhang  showed in \cite{Zhang17} that  when $\kappa$ is weakly compact, then 
$\HL_{1,\theta}(\kappa)$ holds for  all $2\le\theta<\kappa$.
We will improve both results by proving the following:

\begin{thm}\label{thm.HL1theta}
If $\kappa$ is strongly inaccessible and $2\le \theta<\kappa$,
then $\HL_{1,\theta}(\kappa)$ holds.
\end{thm}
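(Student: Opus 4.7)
The plan is to fix a single color $\theta_* < \theta$ together with a base node $\eta^* \in \kgtwo$ and then construct a suitable triple $(\kappa, T, A)$ in the sense of Definition~\ref{defn.suitabletriple} such that every node of $T$ at a level in $A$ is colored $\theta_*$. The tree $T$ will consist of the unique path from the root up to $\eta^*$, together with a strong subtree of $\kgtwo$ extending $\eta^*$, with branching set $A \sse [\lg(\eta^*),\kappa)$ of cardinality $\kappa$.

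The first step is a persistence lemma: \emph{there exist $\theta_* < \theta$ and $\eta^* \in \kgtwo$ such that for every $\nu \in \kgtwo$ with $\eta^* \trngl \nu$, the set
\[
D_\nu := \{\al < \kappa : \exists \mu \in {}^{\al}2 \ (\nu \trngl \mu \text{ and } c(\mu) = \theta_*)\}
\]
is unbounded in $\kappa$.}
The proof is by contradiction. If the lemma fails, then for each pair $(\theta_*, \eta^*)$ one may select, via choice, an extension $\nu(\theta_*, \eta^*) \trngl \eta^*$ and a bound $\beta(\theta_*, \eta^*) < \kappa$ such that no extension of $\nu(\theta_*, \eta^*)$ at any level $\ge \beta(\theta_*, \eta^*)$ is $\theta_*$-colored. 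Running a recursion of length $\theta$ that cycles through all colors and arranging the lengths of the selected nodes to grow past all previously produced bounds---which is possible because $\theta < \cf(\kappa) = \kappa$---one obtains a single node $\eta^*$ of length $< \kappa$ whose extensions at some $\al < \kappa$ can admit no color in $\theta$, contradicting totality of $c$.

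The second step is the main construction. With $\theta_*$ and $\eta^*$ fixed, I would build $T$ by recursion on $\xi < \kappa$, choosing the $\xi$-th branching level $\al_\xi > \lg(\eta^*)$ and extending the partial strong tree $T_\xi$ up through level $\al_\xi + 1$ while maintaining the invariant that every node of $T_\xi$ at a level in $A_\xi = \{\al_\eta : \eta < \xi\}$ is $\theta_*$-colored. At a successor stage $\xi+1$ the partial tree has $2^{|\xi|} < \kappa$ leaves, all of which are extensions of $\eta^*$ and hence $\theta_*$-persistent by Step~1; the task is to select $\al_\xi > \sup A_\xi$ together with a $\theta_*$-colored extension of each leaf at level $\al_\xi$, and then branch at $\al_\xi$ in accordance with condition~(c) of Definition~\ref{defn.suitabletriple}. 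Limit stages are handled by taking unions and invoking regularity of the tree (condition~(2) of regular subtree) to close off branches at the limit level.

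The main obstacle is the simultaneous-extension step at successor stages. Each leaf $\ell$ has, by the persistence lemma, a cofinal set $D_\ell \sse \kappa$ of good levels; but an intersection of fewer than $\kappa$-many cofinal subsets of $\kappa$ need not be cofinal (consider an evens/odds partition of $\kappa$), so naive intersection does not locate a common $\al_\xi$. I expect to handle this via a reflection argument: pass to an elementary submodel $M \prec H(\chi)$ with $\{c, \kappa, \theta, \theta_*, \eta^*, T_\xi\} \sse M$, $|M| < \kappa$, and $\delta := M \cap \kappa < \kappa$, then exploit elementarity together with strong inaccessibility to locate a common good level at or near $\delta$. An alternative route is to upgrade the persistence lemma to a uniform version handling arbitrary $<\kappa$-sized families of extensions of $\eta^*$ at once. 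Either way, this is the technical heart of the proof and the point at which the strong inaccessibility of $\kappa$ is essentially invoked.
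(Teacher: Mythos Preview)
Your persistence lemma is correct (modulo the typo $\nu(\theta_*,\eta^*)\trngl\eta^*$, which should read $\eta^*\trngl\nu(\theta_*,\eta^*)$), and you have correctly located the real difficulty: at a successor stage with leaf set $L$ of size $<\kappa$, the sets $D_\ell$ are each cofinal in $\kappa$, but nothing prevents $\bigcap_{\ell\in L}D_\ell$ from being bounded. Neither of your proposed repairs closes this gap. The elementary-submodel idea gives only that each $D_\ell\cap\delta$ is cofinal in $\delta$, which is no better than what you started with; there is no mechanism forcing $\delta$ (or any ordinal near it) to lie in \emph{every} $D_\ell$. The ``upgraded persistence lemma'' you would actually need---a single $\theta_*$ and $\eta^*$ such that the set $\{\alpha:\text{every }\nu\trngl\eta^*\text{ with }\lg(\nu)<\alpha\text{ has a }\theta_*\text{-coloured extension at level }\alpha\}$ is cofinal---is not a lemma at all: it is equivalent to $\HL_{1,\theta}(\kappa)$ itself, so invoking it is circular. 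The direct-construction strategy you outline is essentially the Dobrinen--Hathaway/Zhang approach, which is known to go through for finite $\theta$ and for weakly compact $\kappa$; the content of the present theorem is precisely that one can drop weak compactness, and that requires a genuinely different idea.

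The paper's argument is by contradiction and is structurally unlike yours. One takes $\theta$ minimal such that $\HL_{1,\theta}(\kappa)$ fails (so $\theta\ge\omega$ is regular) and fixes a witnessing colouring $c$. For each colour $\varepsilon$, one considers the poset $\OB_\varepsilon$ of bounded-height strong subtrees monochromatic in colour $\varepsilon$; the failure of $\HL_{1,\theta}(\kappa)$ forces every such object to extend to a $\le_\varepsilon$-maximal one. One then builds, by recursion on $i<\theta$, antichains $\Lambda_i\sse\kgtwo$ together with, for each $\eta\in\Lambda_i$, a fresh colour $\varepsilon_\eta$ (strictly above all colours used along the path to $\eta$) and a maximal member $\mathbf{m}_\eta\in\OB_{\varepsilon_\eta}$ with stem extending $\eta$. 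Since $|\bigcup_i\Lambda_i|<\kappa$, one can fix a single level $\alpha(*)$ above all the heights involved, and then thread a path $\langle\eta_i:i<\theta\rangle$ through the $\Lambda_i$'s so that every node at level $\alpha(*)$ above $\eta_{i+1}$ avoids colour $\varepsilon_{\eta_i}$---this is exactly where maximality of $\mathbf{m}_{\eta_i}$ is used. The union $\eta=\bigcup_i\eta_i$ then has the property that any $\nu\in{}^{\alpha(*)}2$ above it has $c(\nu)>\varepsilon_{\eta_i}$ for all $i$, and since the $\varepsilon_{\eta_i}$ are strictly increasing in the regular cardinal $\theta$, this forces $c(\nu)\ge\theta$, a contradiction. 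The key manoeuvre---replacing the search for a common good level by the use of maximal partial trees to \emph{exclude} colours one at a time---is entirely absent from your proposal.
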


In  \cite{Dobrinen/Hathaway16}, the upper bound for the  consistency strength of 
 $\HL_{n,\theta}(\kappa)$,
for  $\kappa$ strongly inaccessible, $2\le n<\om$, and 
$2\le \theta<\kappa$,
was  reduced  from  a $\kappa+2n$-strong cardinal to a
 $\kappa+n$-strong cardinal. 
Our first theorem
extends this result to 
strong limit cardinals $\kappa$ of countable cofinality.
The hypotheses of Theorem \ref{thm.1beta2} are satisfied whenever $\kappa$ is  a $\kappa+n$-strong cardinal.

\begin{thm}\label{thm.1beta2}
Let
$1\le \bn<\om$ and $2\le k<\om$ be given.
Suppose
$\lambda\ge (\beth_{\bn}(\kappa))^+$ and that   $\kappa$ is measurable in the generic extension  via
$\bP=$
Cohen$(\kappa,\lambda)$  forcing.
 Let $\utilde{\bQ}$ be a $\bP$-name for Prikry forcing.
Then
$\HL_{n,k}(\kappa)$ holds for all  $1\le n\le \bn$ 
in the
generic extension  forced by $\bP * \utilde{\bQ}$.
\end{thm}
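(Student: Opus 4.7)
The plan is to reduce the problem in the Prikry extension $V_2 := V_1[H]$ to an instance of $\HL_{n,k}(\kappa)$ in the intermediate Cohen extension $V_1 := V[G]$, where $\kappa$ remains measurable. By Shelah's theorem from \cite{Shelah91}, combined with the improvement in \cite{Dobrinen/Hathaway16} that reduces the partition exponent from $2\bn$ to $\bn$, the hypothesis $\lambda \geq (\beth_{\bn}(\kappa))^+$ guarantees $\HL_{n,k}(\kappa)$ in $V_1$ for every $1 \leq n \leq \bn$.

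Given a coloring $c \in V_2$ with a $\bQ$-name $\dot c \in V_1$, the key step is to absorb $c$ into a $V_1$-coloring using the Prikry property of $\bQ$. Let $U$ denote the normal measure on $\kappa$ in $V_1$. For each $\bar\nu$ in the domain, the Prikry property yields some $A_{\bar\nu} \in U$ and a unique value $c^*(\bar\nu) < k$ with $(\emptyset, A_{\bar\nu}) \Vdash_{\bQ} \dot c(\bar\nu) = \check c^*(\bar\nu)$; uniqueness follows from the compatibility of empty-stem conditions via intersection. This defines $c^* \in V_1$. Using the Rowbottom partition property that follows from measurability of $\kappa$, refine each $A_{\bar\nu}$ to some $B_{\bar\nu} \in U$ so that for every stem $s \in [B_{\bar\nu}]^{<\omega}$ and $U$-set $C \subseteq B_{\bar\nu}$, the condition $(s,C)$ still forces $\dot c(\bar\nu) = \check c^*(\bar\nu)$. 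Via normality, form the diagonal intersection
\[
B_* := \Delta_{\alpha<\kappa} \bigcap\{B_{\bar\nu} : \bar\nu \in {}^n({}^\alpha 2)\} \in U.
\]
Then apply $\HL_{n,k}(\kappa)$ in $V_1$ to $c^*$, trimming so that the resulting branching set $A_*$ is contained in $B_*$, to produce a suitable triple $(\kappa, T_*, A_*)$ and regular subtrees $T_0, \dots, T_{n-1}$ in $V_1$ with $c^*$ taking a constant value $\theta_* < k$ on $\bigcup_{\varepsilon \in A_*} \prod_{\ell<n} T_{\ell|\varepsilon}$.

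The trees and $A_*$ remain a valid witness in $V_2$, since Prikry forcing preserves $\kappa$ as a cardinal and adds no bounded subsets of $\kappa$, so the regularity condition and the suitability of the triple are absolute between $V_1$ and $V_2$. The main obstacle is then to verify that the coloring $c$ itself --- not merely its absorbed version $c^*$ --- takes the value $\theta_*$ on the product set in $V_2$. Here one leverages the Mathias characterization of Prikry genericity (the sequence $\vec\kappa$ almost lies in every $U$-set, in particular in each $B_{\bar\nu}$), the Rowbottom uniformity built into $B_{\bar\nu}$, and the diagonal design of $B_*$, which ensures that each branching height $\varepsilon \in A_* \subseteq B_*$ belongs to $B_{\bar\nu}$ whenever $\bar\nu$ is at a level below $\varepsilon$. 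Together these allow one to produce, for each $\bar\nu$ in the product set, a condition in the generic filter $H$ of the form $(\vec\kappa \upharpoonright m, C)$ with $\vec\kappa \upharpoonright m \in [B_{\bar\nu}]^{<\omega}$ and $C \subseteq B_{\bar\nu}$; by the Rowbottom uniformity such a condition forces $\dot c(\bar\nu) = \check\theta_*$, yielding the required monochromaticity of $c$ in $V_2$ and completing the transfer of $\HL_{n,k}(\kappa)$ through the Prikry extension.
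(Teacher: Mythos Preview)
There is a genuine gap in the transfer from $V_1$ to $V_2$. Your absorbed coloring $c^*$ is defined so that $(\emptyset,A_{\bar\nu})\Vdash_{\bQ}\dot c(\bar\nu)=\check c^*(\bar\nu)$, and you then need to know that $c(\bar\nu)=c^*(\bar\nu)$ in $V_2$ for every $\bar\nu$ in the product set. That requires some condition in $H$ lying below $(\emptyset,A_{\bar\nu})$, which in Prikry forcing means the \emph{entire} Prikry sequence $\vec\kappa$ is contained in $A_{\bar\nu}$; the Mathias criterion only gives $\vec\kappa\subseteq^* A_{\bar\nu}$. Your Rowbottom refinement does no work here: any $(s,C)$ with $s\cup C\subseteq B_{\bar\nu}\subseteq A_{\bar\nu}$ is already $\le(\emptyset,A_{\bar\nu})$, so you have not enlarged the set of conditions forcing the desired value. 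And the diagonal intersection $B_*$ goes the wrong way: $\varepsilon\in A_*\subseteq B_*$ tells you $\varepsilon\in B_{\bar\nu}$ for $\bar\nu$ at levels \emph{below} $\varepsilon$, whereas the problematic Prikry points are those $\kappa_0,\dots,\kappa_i$ lying \emph{below} the level $\varepsilon$ of the tuple $\bar\nu$ you are coloring---these need not lie in $B_{\bar\nu}$, so no condition in $H$ with that stem extends $(\emptyset,B_{\bar\nu})$.

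The missing idea is precisely that the value forced for $\dot c(\bar\nu)$ may depend on the \emph{length} of the stem. The paper handles this by defining, for each $m<\omega$, a coloring $c_{n,m}(u,\bar\eta)$ recording the value forced by a stem $u$ of size $m$, and then invoking Lemma~\ref{lem.e8.AimpliesB} to find a single strong subtree $T$ and a set $E\in\mathcal D$ on which $c_{n,m}(u,\bar\eta)=k^*_{n,m}$ depends only on $(n,m)$. In $V_2$ one may assume $\vec\kappa\subseteq E$, so for $\bar\eta\in T$ at a level in $(\lambda_i,\lambda_{i+1})$ the relevant stem is $\{\lambda_0,\dots,\lambda_i\}$ and $c_n(\bar\eta)=k^*_{n,i+1}$. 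A final pigeonhole on $i$ then gives infinitely many intervals $(\lambda_i,\lambda_{i+1})$ with the same color, and one thins $T$ to split only there. Your reduction to a single $V_1$-coloring $c^*$ collapses this $m$-dependence and cannot be repaired without reintroducing it.
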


D\v{z}amonja, Larson, and Mitchell  pointed out in \cite{Dzamonja/Larson/MitchellQ09} that $\HL^+_{2,2}(\kappa)$ implies that $\kappa$ is weakly compact.
In the following theorem
we find lower bounds for the weak version $\HL_{2,\theta}[\kappa]$, for all $2\le \theta<\kappa$.

\begin{thm}\label{nsbpr->negHL}
\begin{enumerate}
\item
If $\kappa$ is the first inaccessible, then $\HL^+_{2,\theta}[\kappa]$ fails, for each $2\le \theta<\kappa$.

\item
Suppose  $\kappa$ is inaccessible 
and not Mahlo.
Then  $\HL^+_{2,\theta}[\kappa]$ fails, for each $2\le \theta<\kappa$.

\item
If $\kappa$ is a singular  strong limit cardinal  and
 $\cf(\kappa)=\mu^+$, 
with $\mu$ regular,  
then
 $\HL^+_{2,\mu^+}[\kappa]$ fails.
\end{enumerate}
\end{thm}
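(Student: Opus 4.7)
Part (1) is an immediate consequence of part (2), since the first inaccessible has no inaccessibles below it and the class of regular cardinals below it is non-stationary (limit cardinals form a club), so it fails to be Mahlo. I focus on parts (2) and (3).

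For part (2), I aim to exhibit a single coloring $c$ and well-ordering sequence $\overline{<}$ that witness failure of $\HL^+_{2,\theta}[\kappa]$ against every suitable triple simultaneously. Since $\kappa$ is inaccessible but non-Mahlo, the set of regular cardinals below $\kappa$ is non-stationary; fix a club $C\subseteq\kappa$ of singular strong limit cardinals. Thinning $C$ and using the inaccessibility of $\kappa$, I arrange that every $\alpha\in C$ has cofinality equal to a fixed regular cardinal $\rho$ with $\theta\le\rho<\kappa$, and for each such $\alpha$ fix an increasing cofinal sequence $\langle \alpha_i : i<\rho\rangle$ in $\alpha$. I would take $<_\alpha$ to be the lexicographic order $\llex$ on ${}^{\alpha}2$. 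The coloring $c:\bigcup_\alpha[{}^{\alpha}2]^2\to\theta$ is defined on pairs $\{\nu_0,\nu_1\}$ at levels $\alpha\in C$ by using the bit patterns $(\nu_0(\alpha_i),\nu_1(\alpha_i))$ at ladder coordinates $\alpha_i$ lying above the level at which $\nu_0,\nu_1$ first differ, encoded into $\theta$ colors so that every color is realizable by some choice of patterns; on levels outside $C$, $c$ is arbitrary.

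To verify failure: given any suitable triple $(\kappa,T,A)$, the set $A\cap C$ is unbounded by clubness of $C$. Fix $\alpha\in A\cap C$ at which a large initial segment of the ladder $\langle \alpha_i:i<\rho\rangle$ passes through many branching levels of $T$ (a pigeonhole argument, using $|A|=\kappa$ and the $\rho$-many ladder points). Pick distinct $\eta_0,\eta_1\in T\cap{}^{\xi}2$ for some low $\xi<\alpha$, with $\eta_0\llex\eta_1$, so that $<_\alpha$-increasingness of extensions is automatic. Using the branching freedom of $T$ at the $A$-levels in $(\xi,\alpha)$, which include sufficiently many ladder coordinates, exhibit for each target color $\theta^*<\theta$ a pair $\nu_0\triangleright\eta_0$, $\nu_1\triangleright\eta_1$ in $T\cap{}^{\alpha}2$ with $c(\{\nu_0,\nu_1\})=\theta^*$. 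This realizes all colors on extensions of $(\eta_0,\eta_1)$, defeating $\HL^+_{2,\theta}[\kappa]$.

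Part (3) is analogous, using the fixed cofinal sequence $\langle \kappa_i : i<\mu^+\rangle$ of $\kappa$ itself in place of per-level ladders; the $\mu^+$ available colors match the length of this sequence, and the verification must ensure that for any $A\in[\kappa]^\kappa$, sufficiently many $\kappa_i$-coordinates interact with $A$ so that all $\mu^+$ colors can be realized above some fixed pair $\{\eta_0,\eta_1\}$ in $T$. \emph{Main obstacle:} the principal difficulty throughout is designing the coloring. A coloring defined only via the split level of $\nu_0,\nu_1$ would be constant above any fixed pair $\{\eta_0,\eta_1\}$, trivially satisfying $\HL^+$. Hence $c$ must depend genuinely on the bits of $\nu_0,\nu_1$ above their split, and must be ``rich'' enough that all $\theta$ (respectively $\mu^+$) colors are realized on extensions inside an arbitrary suitable tree; the careful interplay between the ladder system at singular levels and the branching set $A$ is where the bulk of the technical work lies.
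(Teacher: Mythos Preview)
Your approach diverges substantially from the paper's, and as written it contains a genuine gap.

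The paper does not build the coloring by hand. It first proves a reduction (Lemma~\ref{lem.nsbpr->negHL}): if $\kappa\nrightarrow[\kappa]^2_\theta$ (respectively $\cf(\kappa)\nrightarrow[\cf(\kappa)]^2_\theta$ when $\kappa$ is singular), then $\HL^+_{2,\theta}[\kappa]$ fails. The coloring used there is simply
\[
d(\{\nu_0,\nu_1\})=c\bigl(\{\lg(\nu_0\cap\nu_1),\lg(\nu_0)\}\bigr),
\]
where $c$ witnesses the negative square-bracket relation. This depends only on the split level and the current level, \emph{not} on any bits of $\nu_0,\nu_1$ above the split---contrary to the intuition you state as your ``main obstacle.'' The reason it works is that once you fix $\eta_0,\eta_1$ splitting at $\alpha$, the value $d(\{\nu_0,\nu_1\})=c(\{\alpha,\beta\})$ varies with the level $\beta\in A$ of the extensions, and a short argument (Claim~\ref{claim.4}) extracts $\alpha\in A$ for which $c$ already hits every color on $\{\{\alpha,\beta\}:\beta\in A,\ \beta>\alpha\}$. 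The required negative square-bracket relations are then supplied by known $\PR_1$ results from \cite{Shelah_CardArithBK} (Lemma~\ref{lem.Concl4.8}): for (2), non-Mahlo yields a stationary set not reflecting in inaccessibles, hence $\PR_1(\kappa,\theta,2)$ and so $\kappa\nrightarrow[\kappa]^2_\theta$; for (3), $\PR_1(\mu^+,\mu^+,2)$ holds outright, giving $\mu^+\nrightarrow[\mu^+]^2_{\mu^+}$.

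Your direct ladder construction has two defects. First, a club $C\subseteq\kappa$ in a regular $\kappa$ cannot be thinned to a club all of whose points have the same cofinality $\rho$: any club contains limit points of every regular cofinality below $\kappa$. You could drop to a stationary set of cofinality $\rho$, but then ``$A\cap C$ unbounded by clubness'' is lost. Second, and more seriously, the verification step fails. The adversary chooses $A$, and nothing prevents $A$ from missing $C$ entirely (take $A$ to consist of successor ordinals while $C$ consists of limit cardinals); then your coloring is the arbitrary default value at every level in $A$, and $\HL^+$ is satisfied trivially. Even if $A$ meets $C$ at some $\alpha$, the ladder $\langle\alpha_i:i<\rho\rangle$ is a fixed set of size $\rho<\kappa$ which $A$ may avoid completely; the bits $\nu_j(\alpha_i)$ are then forced by the unique branch of $T$ between consecutive $A$-levels, and you have no freedom to realize different colors. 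The ``pigeonhole argument using $|A|=\kappa$ and the $\rho$-many ladder points'' you invoke does not exist: $|A|=\kappa$ gives no control over how $A$ meets a fixed size-$\rho$ set below a fixed $\alpha$. The same objection applies verbatim to your sketch for (3).
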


\begin{thm}\label{thm.DiamondimpliesNotHL}
Assume $\kappa$ is strongly inaccessible,  $S\sse \kappa$ is a non-reflecting stationary set,
and $\diamondsuit_S$ holds.
Then $\HL_{2,\theta}[\kappa]$ fails, for each $2\le \theta\le\kappa$.
\end{thm}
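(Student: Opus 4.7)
The plan is to use $\diamondsuit_S$ to construct a well-ordering sequence $\overline{<}$ and a coloring $c:\bigcup\{[{}^\alpha 2]^2 : \alpha<\kappa\}\ra\theta$ that together witness the failure of $\HL^+_{2,\theta}[\kappa]$, via a classical $\diamondsuit$-diagonalization. The idea is to use the diamond to guess, at each $\delta\in S$, a potential suitable triple restricted to below $\delta$ together with a designated splitting pair $(\eta^\delta_0,\eta^\delta_1)$ at some level $\beta_\delta<\delta$, and then ``kill'' the guess by defining $c$ at level $\delta$ to force all $\theta$ colors on $<_\delta$-increasing pair-extensions of $(\eta^\delta_0,\eta^\delta_1)$ within the guessed tree at level $\delta$.

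Concretely, I would fix a $\diamondsuit_S$-sequence $\lgl W_\delta : \delta\in S\rgl$ and decode each $W_\delta$ as specifying (when valid) a tuple $(T'_\delta, A'_\delta, \overline{<}^\delta, \beta_\delta, \eta^\delta_0, \eta^\delta_1)$ together with a $\theta$-sequence $\lgl (\nu^{\delta,i}_0, \nu^{\delta,i}_1) : i<\theta\rgl$ of distinct unordered pairs of branches through $T'_\delta$ at level $\delta$ extending $(\eta^\delta_0,\eta^\delta_1)$. By pigeonhole on regular cofinalities (of which there are fewer than $\kappa$), WLOG $S\sse\{\delta<\kappa:\cf(\delta)=\mu\}$ for some regular $\mu$ chosen large enough that $2^\mu\ge\theta$, so that a candidate tree has sufficiently many pair-extensions at level $\delta$. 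The construction proceeds by recursion on $\alpha<\kappa$: for $\alpha\notin S$ take $<_\alpha$ lex and $c\equiv 0$ on $[{}^\alpha 2]^2$; for $\alpha\in S$ where the guess is coherent and matches $\overline{<}\rst\alpha$, arrange $<_\alpha$ so each $(\nu^{\alpha,i}_0, \nu^{\alpha,i}_1)$ is $<_\alpha$-increasing (placing all extensions of $\eta^\alpha_0$ before those of $\eta^\alpha_1$) and set $c(\{\nu^{\alpha,i}_0, \nu^{\alpha,i}_1\})=i$ for $i<\theta$ with other pairs colored $0$; else, default.

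To verify the failure, take any suitable triple $(\kappa,T,A)$, pick distinct $\eta^*_0,\eta^*_1\in T\cap{}^{\beta^*}2$ at some level $\beta^*$, canonically select for each sufficiently large $\delta$ a $\theta$-tuple of pair-extensions in $T\cap{}^\delta 2$ (available since $T$ has $\ge 2^\mu\ge\theta$ extensions of each $\eta^*_\ell$ at level $\delta$ on a club of $\delta$'s with $|A\cap[\beta^*,\delta)|\ge\mu$), and code all this as $X\sse\kappa$. By $\diamondsuit_S$ the set $M=\{\delta\in S : X\cap\delta=W_\delta\}$ is stationary, and at any $\delta\in M\cap A$ lying on the club the construction fires, producing all $\theta$ colors on pair-extensions of $(\eta^*_0,\eta^*_1)$ at level $\delta$, contradicting $(T,A)$ being an $\HL^+_{2,\theta}[\kappa]$-witness.

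The main obstacle is forcing $\delta\in A$: $M$ is stationary but $A$ is only of size $\kappa$ and may fail to intersect $S$ at all. For stationary $A$ the issue is handled by thinning $T$ to a regular subtree whose branching set is a club $A^*\sse A$, making $M\cap A^*$ stationary. The delicate case is $A$ non-stationary and cofinal (e.g., $A$ the set of successors, so $A\cap S=\emptyset$); here I would enrich the construction so that the $\diamondsuit$-derived data at each $\delta\in S$ controls $c$ not only at $\delta$ but across a cofinal interval $(\delta,\delta^*)$ up to the next $\delta^*\in S$, with non-reflection of $S$ used to ensure these intervals partition a cofinal portion of $\kappa$; any cofinal $A$ then meets a kill-able level within one of them. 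Verifying that this amendment preserves both the diamond matching and the extension counts, uniformly in $\theta$, is the technical heart of the proof.
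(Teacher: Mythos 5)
Your overall strategy (a $\diamondsuit_S$ diagonalization against candidate suitable triples) is the right one, but the step you yourself flag as ``the technical heart'' is precisely where the proof lives, and neither of your proposed patches closes it. The central difficulty is that the branching set $A$ of an adversary's suitable triple may be disjoint from $S$ (e.g.\ $A$ a set of successor ordinals), so a coloring that only ``fires'' at levels $\delta\in S$ where the guess is correct proves nothing. Your first patch is wrong as stated: a stationary $A$ need not contain a club, so you cannot in general thin $T$ to a regular subtree whose branching set is a club contained in $A$; and even when $A$ is a club you only get $M\cap A\neq\emptyset$, not the structure your construction needs. Your second patch (letting the guess at $\delta$ control $c$ on an interval $(\delta,\delta^*)$) is not yet a construction: the guess at $\delta$ only sees $T\cap{}^{\delta>}2$, so you must specify, for a pair $\{\nu_0,\nu_1\}$ at a level $\al>\delta$, which guessed $\delta$ is responsible and which color it receives, in such a way that \emph{all} $\theta$ colors are realized among the pairs that happen to lie in the unknown continuation of $T$ at levels that happen to lie in $A$. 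Nothing in your sketch guarantees this, and the counting problem (how many $T$-pairs at a single level of $A$ extend the designated pair) is exactly what makes the single-level ``$\theta$-sequence of pairs at level $\delta$'' plan unworkable.

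The paper resolves both issues with two ideas absent from your proposal. First, the diamond guesses not only $(T\cap{}^{\mu>}2,\, A\cap\mu)$ but also a candidate ``missed-color'' function $d$; one then only needs to realize the single color $d(\{\eta_0,\eta_1\})$ that $d$ claims is omitted above some pair, rather than all $\theta$ colors above one pair. Second, and crucially, the color of $\{\nu_0,\nu_1\}$ at level $\al$ is routed through the meet level $\gamma=\lg(\nu_0\cap\nu_1)$ via an injective scheduling function $g_{0,\al}$ with domain $S\cap\al$ and $g_{0,\al}(\mu)\in A_\mu$, built by induction on $\al$ using non-reflection of $S$ at limit stages (Claim \ref{claim.3.1} in the paper). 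This makes the kill occur at an arbitrary level $\al\in A_\sigma=A\cap\sigma$ (for a second correct guess $\sigma\in S$ above $\mu$), with the responsible pair sitting at level $\gamma\in A_\mu=A\cap\mu$, so membership of levels of $A$ in $S$ is never needed. Without some substitute for this routing mechanism, your argument does not go through.
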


Since the hypotheses of the previous theorem hold in $L$  for all strongly inaccessible $\kappa$ which are not weakly compact, we have the following corollary.

\begin{cor}\label{cor.L}
If $V=L$, then 
for all strongly inaccessible, non-weakly compact $\kappa$ and for 
each $2\le \theta\le\kappa$,
$\HL_{2,\theta}[\kappa]$ fails.
\end{cor}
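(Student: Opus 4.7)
The plan is to derive this corollary as an immediate application of Theorem \ref{thm.DiamondimpliesNotHL}: it suffices to verify, for every strongly inaccessible, non-weakly-compact $\kappa$ in $L$, that there exists a non-reflecting stationary set $S\sse \kappa$ together with $\diamondsuit_S$. Both ingredients are classical consequences of $V=L$, so no new combinatorics is needed; the work is purely in assembling them and invoking the previous theorem.

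For the non-reflecting stationary set, I would appeal to Jensen's fine-structure results in $L$: if $\kappa$ is regular uncountable and not weakly compact, then there exists a stationary $S\sse \kappa$ such that $S\cap\al$ is non-stationary in $\al$ for every $\al<\kappa$ of uncountable cofinality (equivalently, $S$ does not reflect). One standard route is via $\square_{\kappa}$-type sequences available for every $\kappa$ that is not weakly compact in $L$; another is via failure of $\Pi^1_1$-indescribability at $\kappa$ in $L$, from which a $\Pi^1_1$-definable non-reflecting stationary $S$ can be read off. Either way, the hypothesis that $\kappa$ is non-weakly-compact in $L$ is exactly what is needed to rule out reflection.

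For the diamond, I would cite Jensen's theorem that in $L$, for every regular uncountable cardinal $\kappa$ and every stationary $S\sse \kappa$, the principle $\diamondsuit_S$ holds. Applying this to the $S$ produced in the previous step gives both a non-reflecting stationary set and $\diamondsuit_S$ simultaneously, which are precisely the hypotheses of Theorem \ref{thm.DiamondimpliesNotHL}. That theorem then yields the failure of $\HL^+_{2,\theta}[\kappa]$ for each $2\le\theta<\kappa$, finishing the proof.

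There is essentially no serious obstacle here; the corollary is just a packaging of the known $L$-combinatorics (Jensen diamond plus non-reflection at non-weakly-compact inaccessibles) with the preceding theorem. If anything delicate arises, it is only the need to state precisely which $L$-theorems are being cited and to observe that the non-reflecting $S$ obtained is of the form required by Theorem \ref{thm.DiamondimpliesNotHL} (which only asks for non-reflection, without any restriction on the cofinalities of its elements), so no additional argument is needed.
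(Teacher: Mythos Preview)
Your proposal is correct and matches the paper's approach exactly: the paper simply states that Corollary~\ref{cor.L} ``follows immediately'' from Theorem~\ref{thm.DiamondimpliesNotHL}, having earlier remarked that its hypotheses (a non-reflecting stationary $S\sse\kappa$ with $\diamondsuit_S$) hold in $L$ for every strongly inaccessible, non-weakly-compact $\kappa$. Your write-up is in fact more detailed than the paper's, spelling out the Jensen results being invoked.
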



\section{Halpern--\Lauchli\   on one tree}\label{sec.Sec1}

In 
\cite{Dobrinen/Hathaway16},
Hathaway and the first author proved  that $\HL_{1,n}(\kappa)$ holds for all  weakly compact
 $\kappa$ and 
all $2\le n<\om$;
Zhang pointed out that the  proof in \cite{Dobrinen/Hathaway16}   actually  implies $\HL_{1,n}(\kappa)$ holds  for all strongly inaccessible cardinals $\kappa$.
In \cite{Zhang17},
Zhang proved 
$\HL_{1,\theta}(\kappa)$ holds  for all weakly compact 
 $\kappa$  and all $2\le\theta<\kappa$.
Zhang also proved two  consistency results 
showing that under certain large cardinal assumptions, 
it is consistent that there is a strongly inaccessible, not weakly compact cardinal $\kappa$ such that 
 for all $2\le\theta<\kappa$, $\HL_{1,\theta}(\kappa)$ holds. 
(See Corollary 5.7 and Theorem 5.8 in \cite{Zhang17}.)

The following theorem shows that the strong tree version of  Halpern--\Lauchli\  holds on one tree for all strongly inaccessible $\kappa$ and all colorings into less than $\kappa$ many colors.

\begin{thmHL}
If $\kappa$ is strongly inaccessible and $2\le \theta<\kappa$,
then $\HL_{1,\theta}(\kappa)$ holds.
\end{thmHL}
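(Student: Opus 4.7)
The plan is to prove $\HL_{1,\theta}(\kappa)$ by first locating a node above which the collection of cofinally attainable colors has stabilized, and then constructing a monochromatic strong subtree above that node by a routine transfinite recursion.

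For each $\eta \in \kgtwo$, define
\[
I_\eta = \{i < \theta : \{\beta < \kappa : (\exists\, \nu \in {}^\beta 2)\ \eta \trngl \nu \mbox{ and } c(\nu) = i\} \mbox{ is cofinal in } \kappa\}.
\]
Because $\theta < \kappa$ and $\kappa$ is regular, pigeonhole along any single branch through $\eta$ shows $I_\eta \ne \emptyset$; and by splitting the witnesses for each $i \in I_\eta$ according to which immediate child of $\eta$ they extend, one checks the identity
\[
I_\eta \ =\ I_{\eta^{\frown}\lgl 0\rgl} \cup I_{\eta^{\frown}\lgl 1\rgl}.
\]

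Call $\eta$ \emph{stable} if $I_\nu = I_\eta$ for every $\nu \in \kgtwo$ with $\eta \trngl \nu$. I next show that stable nodes exist: starting from $\eta^{(0)} = \emptyset$, whenever $\eta^{(\alpha)}$ is not stable, pick some $\eta^{(\alpha+1)} \in \kgtwo$ with $\eta^{(\alpha)} \trngl \eta^{(\alpha+1)}$ and $I_{\eta^{(\alpha+1)}} \subsetneq I_{\eta^{(\alpha)}}$; at a limit stage $\lambda$, set $\eta^{(\lambda)} = \bigcup_{\alpha<\lambda} \eta^{(\alpha)}$. Each successor step strictly removes at least one element of $\theta$ from $I$, so the strictly decreasing sequence $I_{\eta^{(0)}} \supsetneq I_{\eta^{(1)}} \supsetneq \cdots$ has ordinal length less than $\theta^+ < \kappa$. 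By the regularity of $\kappa$ the lengths $\lg(\eta^{(\alpha)})$ also stay bounded below $\kappa$ throughout, so the recursion takes place entirely within $\kgtwo$ and terminates at some stable $\eta^* \in \kgtwo$.

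Now fix any $i^* \in I_{\eta^*}$. By stability of $\eta^*$, $i^* \in I_\nu$ for every $\nu$ extending $\eta^*$. I build $T$ and $A$ above $\eta^*$ by recursion on $\xi < \kappa$. At a successor stage, the fewer than $\kappa$ top-level nodes of the partial tree each extend $\eta^*$ and so each admits cofinally many $i^*$-colored extensions in $\kgtwo$; since the intersection of fewer than $\kappa$ cofinal subsets of $\kappa$ is cofinal (by the regularity of $\kappa$), a common new branching level $\alpha_\xi$ can be chosen at which every top-level node has an $i^*$-colored extension; adjoin these extensions to $T$, declare $\alpha_\xi \in A$, and split there. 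At limit stages take unions and adjoin the unique limit nodes, which still extend $\eta^*$ and so still satisfy $i^* \in I$. Finally, prepending to the resulting tree the unique branchless path from $\emptyset$ up to $\eta^*$ yields a suitable triple $(\kappa, T, A)$ on which $c$ is constantly $i^*$ at every branching level, witnessing $\HL_{1,\theta}(\kappa)$.

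The main obstacle addressed by this argument is that the identity $I_\eta = I_{\eta^{\frown}\lgl 0\rgl} \cup I_{\eta^{\frown}\lgl 1\rgl}$ permits \emph{both} children of a node to have $I$ strictly smaller than $I_\eta$ (for instance, they could partition $I_\eta$ between them). Consequently, the naive collection of nodes preserving $I_\emptyset$ need not be closed under taking both children and so need not contain a strong subtree. The descent to a stable $\eta^*$ bypasses this obstruction: at $\eta^*$ and above, both children of every node share the same cofinal color set, which is exactly what the successor step of the monochromatic construction requires.
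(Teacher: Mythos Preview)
Your stabilization step is fine, but the successor step of the ``routine transfinite recursion'' contains a genuine error. You assert that ``the intersection of fewer than $\kappa$ cofinal subsets of $\kappa$ is cofinal (by the regularity of $\kappa$)''. This is false: the successor ordinals and the limit ordinals are two disjoint cofinal subsets of any regular uncountable $\kappa$. What regularity gives you is that fewer than $\kappa$ \emph{club} sets intersect in a club, but the sets $B_\nu=\{\beta:\nu\text{ has an }i^*\text{-colored extension at level }\beta\}$ have no reason to be closed. Stability tells you only that each $B_\nu$ is individually unbounded; it does not synchronize them. Concretely, nothing you have established rules out that at some stage the two children $\nu^{\frown}\lgl 0\rgl,\nu^{\frown}\lgl 1\rgl$ of a current leaf have $B_{\nu^{\frown}\lgl 0\rgl}$ and $B_{\nu^{\frown}\lgl 1\rgl}$ partitioning $B_\nu$ into two disjoint cofinal pieces, so that no common level exists. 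Your recursion halts right there, and the stabilization alone does not tell you how to restart it.

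The paper's proof addresses exactly this synchronization difficulty by a different mechanism. It argues by contradiction from a minimal bad $\theta$ (necessarily an infinite regular cardinal), and for each color $\ve$ considers the poset $\OBep$ of \emph{bounded} strong subtrees monochromatic in color $\ve$, showing each such object sits below a $\le_\ve$-maximal one (else one could build a full monochromatic strong subtree). Maximality is the key leverage your argument lacks: if $\bm$ is $\le_\ve$-maximal with top level $\al_{\bm}$, then for any fixed higher level $\al(*)$ there must be a top node of $T_{\bm}$ with \emph{no} color-$\ve$ extension at level $\al(*)$ --- otherwise $\bm$ could be extended. Iterating this through a carefully layered tree of maximal objects (with strictly increasing color values along each branch, of length $\theta$) produces a node above which every color below $\theta$ is missed at level $\al(*)$, a contradiction. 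In short, the paper turns the failure of the naive extension step into useful information; your argument simply assumes the extension step succeeds.
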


\begin{proof}
Suppose that $\kappa$ is strongly inaccessible and that $2\le \theta<\kappa$ is the least ordinal such that 
 $\HL_{1,\theta}(\kappa)$ fails.
By a result in \cite{Dobrinen/Hathaway16},
 $\theta$ must be at least $\om$;
furthermore, it is straightforward to see that 
$\theta$ must be a regular cardinal.
Let $c:{}^{\kappa>}2\ra\theta$ be a coloring which witnesses failure of  $\HL_{1,\theta}(\kappa)$.
Without loss of generality, we may 
 assume that  $\theta$ is the range of the coloring $c$.

For $\varepsilon<\theta$, 
define $\OBep$ to be the set of triples 
\begin{equation}
\bm=(T,A,\al)=(T_{\bm},A_{\bm},\al_{\bm})
\end{equation}
satisfying the following (a)--(f):
\begin{enumerate}
\item[(a)]
$\al<\kappa$ and $A\sse\al$ is unbounded in $\al$.
\item[(b)]
$T$ is a subtree of ${}^{\al\ge}2$.
\item[(c)]
If $\eta\in T$, then there is a $\nu\in T\cap{}^{\al}2$ such that $\eta\trngl\nu$.
\item[(d)]
If $\eta\in T$, then (${\eta}^{\frown}0$ and 
${\eta}^{\frown}1$ are both in $T$) $\Longleftrightarrow$ $\lg(\eta)\in A$.
\item[(e)]
If $\delta\le\al$ is a limit ordinal and $\eta\in {}^{\delta}2$, then 
$\eta\in T$  $\Longleftrightarrow$ 
$\forall \beta<\delta\, (\eta\re\beta\in T)$.
\item[(f)]
$c$ is constant with value $\ve$ on $T\cap(\bigcup_{\beta\in A}{}^{\beta}2)$.
\end{enumerate}

Define $\le_{\ve}$ as the following $2$-place relation on $\OBep$:
For $\bm,\mathbf{n}\in \OBep$,
$\bm\le_{\ve}\mathbf{n}$ if and only if 
$\al_{\bm}\le \al_{\mathbf{n}}$,
 $A_{\bm}=A_{\mathbf{n}}\cap\al_{\bm}$,
and 
$T_{\bm}=T_{\mathbf{n}}\cap {}^{\al_{\bm\ge}}2$.

The following facts are straightforward.

\begin{fact}\label{fact.4}
\begin{enumerate}
\item
$\le_{\ve}$ is a partial order on $\OBep$.

\item
If $\lgl \bm_i:i<\delta\rgl$ is a $\le_{\ve}$-increasing sequences, where $\delta<\kappa$,
then the sequence has a $\le_{\ve}$-least upper bound.
\end{enumerate}
\end{fact}

\begin{proof}
(1) is clear.  For (2), take  $\al=\sup_{i<\delta}\al_{\bm_i}$ and  $A=\bigcup_{i<\delta}A_{\bm_i}$, and take 
$T$ to be $\bigcup_{i<\delta}T_{\bm_i}$ along with all maximal branches in $\bigcup_{i<\delta}T_{\bm_i}$.
Then $\bm=(\al,A,T)$ is a member of $\OBep$ and is the 
$\le_{\ve}$-least upper bound of $\lgl \bm_i:i<\delta\rgl$.
\end{proof}


\begin{lem}\label{lem.5}
For each $\ve<\theta$ and each $\bm\in\OBep$, there is an $\bn\in\OBep$ such that  
$\bm \le_{\ve}\bn$ and $\bn$ is 
$\le_{\ve}$-maximal.
\end{lem}

\begin{proof}
Suppose not.  
Then there are $\ve<\theta$ and $\bm_0\in\OBep$ such that for each 
$\bn\in\OBep$, $\bm_0\le_{\ve}\bn$ implies that $\bn$ is not maximal.
Thus, we can build a $\le_{\ve}$-strictly increasing sequence $\lgl \bm_i:i<\kappa\rgl$ as follows:
Given $\bm_i$,  since $\bm_0\le_{\ve}\bm_i$
there is some $\bm_{i+1}\in 
\OBep$ such that 
$\bm_i<_{\ve}\bm_{i+1}$.
If $i<\kappa$ is a limit ordinal, take $\bm_i$ to be the least upper bound of $\lgl \bm_j:j<i\rgl$, guaranteed by Fact \ref{fact.4}.

Let $A=\bigcup_{i<\kappa}$ and $T=\bigcup_{i<\kappa}T_{\bm_i}$.
Note that $A\in [\kappa]^{\kappa}$ 
since $\sup_{i<\kappa}\al_{\bm_i}=\kappa$ and each
$A_{\bm_i}$  is unbounded in $\al_{\bm_i}$.
Thus, $(\kappa,T,A)$ is a suitable triple.   
But then $c$ has constant value $\ve$ on 
$\bigcup_{\al\in A}T\cap {}^{\al}2$, contradicting that $c$ witnesses the failure of $\HL_{1,\theta}(\kappa)$.
\end{proof}

We will choose $(\Lambda_i, \overline{\bm}_i, \overline{\ve}_i)$ by induction on $i<\theta$ satisfying the following.

\begin{enumerate}
\item[(a)]
$\Lambda_i$ is a nonempty set of  pairwise $\lhd$-incomparable nodes in ${}^{\kappa>}2$.

\item[(b)]
If $j<i$, then for each $\eta\in\Lambda_i$ there is a unique $\nu\in \Lambda_j$ such that $\nu\lhd \eta$.
(It follows that $\Lambda_j\cap\Lambda_i=\emptyset$.)

\item[(c)]
$\overline{\ve}_i=\lgl \ve_{\eta}=\ve(\eta):\eta\in\Lambda_i\rgl$,
where $\ve_{\eta}$ is the minimum ordinal 
 in  the range of $c$ on $\{\zeta\in {}^{\kappa>}2:\eta\trngl \zeta\}$ 
which is also above 
$\sup\{\ve_{\nu}:\exists j<i\, (\nu\in\Lambda_j)\wedge\nu\lhd \eta\}$.

\item[(d)]
$\overline{\bm}_i=\lgl\bm_{\eta}=\bm(\eta):\eta\in\Lambda_i\rgl$.
Notation: $\bm_\eta=(T_{\eta}, A_{\eta}, \al_{\eta}=\al(\eta))$.
(There is no ambiguity using $\eta$ as an index since the $\Lambda_i$ will be disjoint.)

\item[(e)]
$\bm_{\eta}$ is a $\le_{\ve(\eta)}$-maximal   member of $\OB_{\ve(\eta)}$.

\item[(f)]
$\eta\in T_{\eta}$ and $\lg(\eta)\le \min(A_{\eta})$.
(This implies that $\eta\trngl\stem(T_\eta)$.)

\item[(g)]
If $i$ is a limit ordinal and
$\lgl \eta_j:j<i\rgl$, 
 $\eta_j\in \Lambda_j$,
 is an $\lhd$-increasing sequence,
then $\bigcup_{j<i}\eta_j\in\Lambda_i$.
\end{enumerate}

We now carry out the inductive construction.
\vskip.1in

\noindent {\bf Case 1:}  $i=0$.
Let $\Lambda_0=\{\lgl\rgl\}$ and  $\ve_{\lgl\rgl}=0$.
Take $\bm_{\lgl\rgl}$  to be any $\le_0$-maximal member of $\OB_0$, and let $\overline{\bm}_0=\lgl \bm_{\lgl\rgl}\rgl$.
\vskip.1in

For Cases 2 and 3, we use the following notation.
For  $\eta\in {}^{\kappa>}2$,
define 
\begin{equation}
\Theta_\eta=\{\ve\in\theta:\exists\zeta\, (\eta\trngl\zeta\wedge c(\zeta)=\ve)\}.
\end{equation}
That is, $\Theta_\eta$ is the range of $c$ on the set of  nodes in ${}^{\kappa>}2$ extending $\eta$.
Note that $|\Theta_\eta|=\theta$, since $\theta$ is by assumption the least ordinal for which $\HL_{1,\theta}(\kappa)$ fails and the coloring $c$  witnesses this failure.
\vskip.1in

\noindent {\bf Case 2:}  $i=j+1$.
Let $\Lambda_i=\bigcup_{\nu\in\Lambda_j} T_\nu\cap {}^{\al(\nu)}2$.
Since $\Lambda_j$ is an antichain, so is $\Lambda_i$.
 By (f) of the induction hypothesis,
for each $\nu\in \Lambda_j$,
 $\nu\lhd\stem(T_\nu)$ and hence  $|\nu|<\al(\nu)$.
Given $\eta\in\Lambda_i$,
choose
\begin{equation}
\ve_{\eta}=
\min(\Theta_\eta\setminus
\{\ve_\nu:\nu\lhd\eta\mathrm{\ and\ }\exists k<i\, (\nu\in\Lambda_{k})\}).
\end{equation}
Then choose  $\bm_\eta$  to be some $\le_{\ve_{\eta}}$-maximal member of $\OB_{\ve_\eta}$ such that 
$\eta\trngl \stem(T_{\eta})$.
\vskip.1in

\noindent {\bf Case 3:}  $i$ is a limit ordinal.
Let $\Lambda_i$ be the set of  all nodes $\eta\in T$ such that 
$\lg(\eta)$ is a limit ordinal and $\forall\, \al<\lg(\eta)$,  $\exists\, j<i\ \exists\, \nu\in \Lambda_j\, (\nu\lhd\eta\wedge \lg(\nu)\ge\al)$.
In other words, $\Lambda_i$ is the set of limits of  $\lhd$-increasing sequences 
$\lgl \nu_j:j<i\rgl$  with each  $\nu_j\in \Lambda_j$.
For $\eta\in\Lambda_i$,
 choose $ \ve_\eta$ and $\bm_\eta$ as in Case 2.
\vskip.1in

Let 
$\Lambda=\bigcup_{i<\theta}\Lambda_i$.
Note that $|\Lambda|<\kappa$
since  $|\Lambda_i|<\kappa$ for each $i<\theta$, 
and  $\kappa$ is  regular.
Fix some $\al(*)<\kappa$  greater than $\sup\{\al_\eta:\eta\in \Lambda\}$.
We choose  by induction 
a sequence $\lgl \eta_i:i<\theta\rgl$ 
such that 
\begin{enumerate}
\item[(a)]
$\eta_i\in\Lambda_i$;
\item[(b)]
$j<i$ implies $\eta_j\lhd\eta_i$;
\item[(c)]
 $\eta_i\trngl\nu\in {}^{\al(*)}2$ implies  $c(\nu)>\ve_{\eta_j}$ for all $j<i$.
\end{enumerate}

\noindent{\bf Case 1:} $i=0$.
Let $\eta_0\in \Lambda_0$; that is, $\eta_0=\lgl\rgl$.
\vskip.1in

\noindent{\bf Case 2:} $i$ is a limit ordinal.
By   the construction of the $(\Lambda_i,\overline{\bm}_i,\overline{\ve}_i)$,
 $\eta_i=\bigcup\{\eta_j:j<i\}$ belongs to $\Lambda_i$,
where $\eta_j$ is the member of $\Lambda_j$ such that $\eta_j\lhd \eta_i$.
Clearly,  (b)  holds, and  (c) follows from 
(b) and (c) holding for all $j<i$.
\vskip.1in

\noindent{\bf Case 3:} $i=j+1$.
Then $\eta_j\in\Lambda_j$.
Let 
\begin{equation}
\Omega=\{\eta\in T_{\eta_j}\cap{}^{\al(\eta_j)}2: \exists \nu\in {}^{\al(*)}2\,
 (\eta\lhd\nu\wedge c(\nu)=\ve_{\eta_j})\}.
\end{equation}
Now, if $\Omega=T_{\eta_j}\cap{}^{\al(\eta_j)}2$ then we get a contradiction to $\bm(\eta_j)$ being  $\le_{\ve(\eta_j)}$-maximal.
So we can choose $\eta_i\in T_{\eta_j}\cap {}^{\al(\eta_j)}2$ which is not in $\Omega$.
Then $\eta_i$ is in $\Lambda_i$, so (a) holds, and 
for all $\nu\in {}^{\al(*)}2$ such that $\eta_i\lhd \nu$, $c(\nu)\ne \ve_{\eta_j}$.

Recall that by  the definition of 
 $\ve_{\eta_i}$,
for each $\nu$ above $\eta_i$, either $c(\nu)\ge\ve_{\eta_i}$ or else $c(\nu)= \ve_{\eta_{k}}$ for some 
 $k\le j$.
We have already seen that 
$c(\nu)\ne \ve_{\eta_j}$, and 
 by the induction hypothesis,
$c(\nu)> \ve_{\eta_{k}}$ for all $k<j$.
Thus, (c) holds.
Note that (b) holds since
 $\eta_j\lhd\stem(T_{\eta_j})$.

This finishes the construction of a sequence $\lgl \eta_i:i<\theta\rgl$ satisfying (a)--(c).
Let $\eta=\bigcup_{i<\theta} \eta_i$,
noting that $\lg(\eta)\le\al(*)$.
Take any $\nu\in {}^{\al(*)}2$ such that $\eta\trngl \nu$.
Then 
for  each  $i<\theta$, $\eta_{i+1}\trngl\nu$  so 
(c)  implies  that 
 $c(\nu)>\ve_{\eta_{i}}$.
The  sequence of  ordinals
$\lgl \ve_{\eta_i}:i<\theta\rgl$
 is  strictly increasing, 
so
  $\sup_{i<\theta}\ve_{\eta_i}=\theta$
since $\theta$ is regular, implying that $c(\nu)\ge\theta$.
But this  contradicts  that $c(\nu)$ must be in $\theta$.
\end{proof}


\section{Halpern--\Lauchli\  at singular cardinals of countable cofinality}\label{sec.Sec1}

In this section, we prove
Theorem \ref{thm.1beta2}, the consistency of  
 $\HL_{n,\theta}(\kappa)$ 
for 
$\kappa$  a singular cardinal of countable cofinality.

\begin{notation}
Given  $1\le n<\om$, we  define the function $\xi$ on  $\bigcup\{{}^{n}({}^{\al}2):\al<\kappa\}$ as follows:
For $\al<\kappa$ and  $\bar{\eta}\in {}^{n}({}^{\al}2)$, let
\begin{equation}
\xi(\bar{\eta})=\min\{\beta\le \al+1: \beta\le \al\ \mathrm{implies\ } \lgl \eta_{\ell}\re \beta:\ell<n\rgl\mathrm{\ has \ no \ repetition}\}.
\end{equation}
Thus,
for $\al<\kappa$ and $\bar{\eta}\in {}^n({}^{\al}2)$,
 if all members of the tuple $\bar{\eta}$ are distinct,
then $\xi(\bar{\eta})$ is the least ordinal where they are all distinct;
if the members of $\bar{\eta}$ are not all distinct,
then $\xi(\bar{\eta})=\al+1$.

Recall that given $\ell<\om$, we let $\eta^*_\ell=\lgl (0_{\ell}), 1\rgl$ denote the sequence  of length $\ell+1$ where the last entry is $1$ and all other entries are $0$.
Given $1\le n<\om$,
define
\begin{equation}\label{eq.A_n}
A_n(\kappa)=\{\bar\eta=(\eta_0,\dots,\eta_{n-1})\in \bigcup_{\al<\kappa}\naltwo:
 (\forall \ell<n)\, \eta_{\ell}^*\trngl \eta_\ell\},
\end{equation}
and for $m<\om$, define
\begin{equation}\label{eq.A_nm}
A_{n,m}(\kappa)=\{(u,\bar{\eta}):\exists \gamma<\kappa\, (u\in [\gamma]^{m}\ \mathrm{ and\ } \bar{\eta}\in {}^{n}({}^{\gamma}2))\}.
\end{equation}
When $\kappa$ is clear, we omit it and simply write $A_n$ and $A_{n,m}$.
\end{notation}


\begin{lem}\label{lem.e8.AimpliesB}
(A) $\Longrightarrow$ (B), where

(A) is the statement:
\begin{enumerate}
\item[(a)]
$1\le \bn$, $\kappa$ is inaccessible,
and
$\lambda\ge(\beth_{\mathbf{n}}(\kappa))^+$.
\item[(b)]
$\bP$ is Cohen$(\kappa,\lambda)$.
\item[(c)]
$\kappa$ is $\bP$-indestructibly measurable.
That is, there is a $\bP$-name $\utilde{\mathcal{D}}$ so that  $\forces_{\bP} ``\utilde{\mathcal{D}}$ is a normal ultrafilter on $\kappa$''.
\item[(d)]
For any $1\le n\le \bn$, $m<\om$, and $2\le k_{n,m}<\om$,
$\utilde{c}_{\,n,m}$ is a $\bP$-name for a function with domain
$A_{n,m}(\kappa)$ and range $k_{n,m}$.
\end{enumerate}

(B) is the statement:
There exist $(\utilde{g},\utilde{h})$ such that
\begin{enumerate}
\item[($\al$)]
\begin{enumerate}
\item[(i)]
$\utilde{g}$ is a $\bP$-name for an increasing function from $\kappa$ to $\kappa$.

\item[(ii)]
$\utilde{h}$ is  a $\bP$-name for a function from $\kgtwo$ into $\kgtwo$ mapping ${}^{\al}2$ into ${}^{\utilde{g}(\al)}2$, for each $\al<\kappa$.

\item[(iii)]
${\utilde{h}(\eta)}^{\frown}\lgl\ell\rgl \trngl \utilde{h}(\eta^{\frown}\lgl \ell\rgl)$, for $\eta\in \kgtwo$ and $\ell<2$.
\end{enumerate}

\item[($\beta$)] 
There is a $\bP$-name $\utilde{D}$ for a set in $\utilde{\mathcal{D}}$ such that
given  $1\le n\le \mathbf{n}$ and $m<\om$,
 for each
 $(u,\bar{\eta})\in A_{n,m}$  with $u\sse \utilde{D}$ and
$\eta^*_{\ell}\trngl\eta_\ell$ for all $\ell<\bf{n}$,
the value  of  $\utilde{c}_{\, n,m}(u,h''(\bar{\eta}))$ in the $\bP$-generic extension of $V$
depends only on $(n,m)$.
\end{enumerate}
\end{lem}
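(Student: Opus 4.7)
The plan is to adapt Shelah's technique from \cite{Shelah91}. View $\bP=\mathrm{Cohen}(\kappa,\lambda)$ as partial functions $p:a\to 2$ with $a\in[\lambda]^{<\kappa}$; this forcing is $<\kappa$-closed and has the $\kappa^+$-cc. I construct $\utilde{g}$ and $\utilde{h}$ by recursion on the level $\al<\kappa$: to each $\eta\in{}^\al 2$ I assign a block $I_\eta\in[\lambda]^{<\kappa}$ of fresh Cohen coordinates, pairwise disjoint across different $\eta$'s at each level and disjoint from all blocks used at earlier levels. Set $\utilde{h}(\eta{}^\frown\lgl\ell\rgl)=\utilde{h}(\eta){}^\frown\lgl\ell\rgl{}^\frown(\utilde{G}\rst I_{\eta{}^\frown\lgl\ell\rgl})$, where $\utilde{G}$ is the Cohen generic; take unions at limits; and let $\utilde{g}(\al)$ be the resulting common length. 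Clauses ($\al$)(i)--(iii) are then immediate from the construction.

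To achieve clause ($\beta$), fix $1\le n\le\bn$ and $m<\om$. For any $(u,\bar\eta)\in A_{n,m}$ with $\eta_\ell^*\trngl\eta_\ell$, the $\bP$-name $\utilde{c}_{n,m}(u,\utilde{h}''(\bar\eta))$ is decided by a condition of support size $<\kappa$, and this support is controlled by the $I_\nu$'s with $\nu\trngl\eta_\ell$ for some $\ell<n$, together with coordinates coding the ordinals in $u$. Because the blocks are placed uniformly, any tree-isomorphism between two configurations $(u,\bar\eta)$ and $(u',\bar\eta')$ respecting the fixed initial segments $\eta^*_\ell$ and the relative positions of levels induces a measure-preserving permutation of the relevant $\lambda$-coordinates, and hence identifies the two name values. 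Applying the Erd\H{o}s--Rado theorem --- which holds because $\lambda\ge(\beth_{\bn}(\kappa))^+$ --- now canonizes: each name $\utilde{c}_{n,m}(u,\utilde{h}''(\bar\eta))$ depends only on the isomorphism type of $(u,\bar\eta)$ and on the common level $\gamma$ where the $\eta_\ell$'s live.

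In $V^\bP$ the normality of $\utilde{\mathcal{D}}$ homogenizes in $\gamma$: for each fixed canonical type, $\mathcal{D}$-measure-one many levels $\gamma$ yield the same color. Since there are only countably many pairs $(n,m)$ with $1\le n\le\bn$ and $m<\om$, and only boundedly many canonical types for each, taking the diagonal intersection of the corresponding large sets produces a single $\utilde{D}\in\utilde{\mathcal{D}}$ such that, whenever $u\sse\utilde{D}$ and $\eta^*_\ell\trngl\eta_\ell$, the color $\utilde{c}_{n,m}(u,\utilde{h}''(\bar\eta))$ depends only on $(n,m)$.

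The main obstacle is the interaction between the block assignment $\eta\mapsto I_\eta$ and the Erd\H{o}s--Rado canonization: one must ensure that the symmetries generated by permuting the $I_\eta$'s across tree-isomorphic configurations genuinely preserve the $\bP$-names of the colors $\utilde{c}_{n,m}$, so that the partition relation applied to the appropriate tuples of indices in $\lambda$ extracts exactly the coarse canonical form the lemma requires. Arranging this simultaneously for all $1\le n\le\bn$ and $m<\om$, and compatibly with the recursive construction of $\utilde{h}$ so that no inductive stage destroys an earlier symmetry, is where the delicate combinatorial bookkeeping of the argument lives.
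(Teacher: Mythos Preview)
Your proposal has a genuine gap at the symmetry step. You write that a tree-isomorphism between two configurations ``induces a measure-preserving permutation of the relevant $\lambda$-coordinates, and hence identifies the two name values.'' This is where the argument breaks. A permutation of Cohen coordinates certainly induces an automorphism of $\bP$ and hence carries $\utilde{h}''(\bar\eta)$ to $\utilde{h}''(\bar\eta')$, but it has no reason to fix the $\bP$-name $\utilde{c}_{n,m}$: the hypotheses place no symmetry constraint whatsoever on these names. So the conclusion that the color values coincide is unwarranted. Relatedly, your claim that the support of a condition deciding $\utilde{c}_{n,m}(u,\utilde{h}''(\bar\eta))$ is ``controlled by the $I_\nu$'s'' is not justified; the deciding condition may have support scattered anywhere in $\lambda$.

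The paper's proof avoids symmetry entirely and uses Erd\H{o}s--Rado in a different place. Rather than fixing blocks $I_\eta$ in advance, one lets $\vec\al$ range over $[\lambda]^{\bn}$ and, working in $V_1=V[G]$, uses normality of $\mathcal{D}$ to find, for each $\vec\al$, a set $X_{\vec\al}\in\mathcal{D}$ and integers $k_{n,m,\vec\al}$ so that $c_{n,m}$ is constant on the configurations built from the Cohen branches indexed by $\vec\al$ at levels in $X_{\vec\al}$. Pulling back to $V$ yields a condition $p_{\vec\al}$ forcing this. Now Erd\H{o}s--Rado is applied to the map $\vec\al\mapsto(\text{shape and range of }p_{\vec\al},\; \lgl k_{n,m,\vec\al}\rgl,\ldots)$, a coloring of $[\lambda]^{\bn}$ into $\kappa$ colors; this is exactly where $\lambda\ge(\beth_{\bn}(\kappa))^+$ is used. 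The output is a set $J\in[\lambda]^{\kappa}$ on which the conditions $p_{\vec\al}$ are image homogenized and the $k_{n,m,\vec\al}$ are constant. A compatibility lemma (Lemma~\ref{lem.DH4.3}) then produces large pairwise-compatible families of such conditions, and the tree $T$ (equivalently $\utilde{h}$) is built recursively in $V_1$, level by level, by extending unions of these compatible conditions to hit $X_{\vec\al}$-levels. The point is that Erd\H{o}s--Rado homogenizes the \emph{deciding conditions themselves}, not an abstract isomorphism type; no automorphism of $\bP$ is ever invoked.
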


The proof  of Lemma \ref{lem.e8.AimpliesB} will use 
the following Theorem \ref{thm.ER} and Lemma \ref{lem.DH4.3}.

\begin{thm}[\Erdos--Rado, \cite{Erdos/Rado56}]\label{thm.ER}
For $r\ge 0$ finite and $\kappa$ an infinite cardinal,
$\beth_r(\kappa)^+\ra(\kappa^+)^{r+1}_{\kappa}$.
\end{thm}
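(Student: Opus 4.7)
The plan is induction on $r \geq 0$. The base case $r = 0$ is the pigeonhole principle for the regular cardinal $\kappa^+ = \beth_0(\kappa)^+$: any coloring $c : \kappa^+ \to \kappa$ has a preimage class of cardinality $\kappa^+$, giving a monochromatic singleton set of the required size.

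For the inductive step, assume the statement for $r$ and set $\lambda = \beth_{r+1}(\kappa)^+$ and $\mu = \beth_r(\kappa)$, so $\lambda = (2^\mu)^+$ is regular. Given $c : [\lambda]^{r+2} \to \kappa$, the strategy is a two-step end-homogeneity reduction. First I would produce an \emph{end-homogeneous} set $A \subseteq \lambda$ of cardinality $\mu^+$, meaning $c(s \cup \{\beta\}) = c(s \cup \{\gamma\})$ for every $s \in [A]^{r+1}$ and every $\beta, \gamma \in A$ with $\max(s) < \beta < \gamma$. Given such an $A$, the induced coloring $c' : [A]^{r+1} \to \kappa$, $c'(s) := c(s \cup \{\beta\})$ for any $\beta \in A$ above $\max(s)$, is well-defined; applying the inductive hypothesis to $c'$ on $A$ (of cardinality $\mu^+ = \beth_r(\kappa)^+$) yields a $c'$-homogeneous subset $A' \subseteq A$ with $|A'| = \kappa^+$, and this $A'$ is automatically $c$-homogeneous.

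To construct $A$, I would recursively build $\alpha_\xi \in \lambda$ and $H_\xi \subseteq \lambda$ for $\xi < \mu^+$, aiming at $\alpha_\xi = \min H_\xi$, $|H_\xi| = \lambda$, $H_{\xi+1} \subseteq H_\xi \setminus \{\alpha_\xi\}$, and the type-coherence condition that every $\beta \in H_\xi$ shares the same \emph{$B_\xi$-type}, where $B_\xi := \{\alpha_\eta : \eta < \xi\}$ and the $B_\xi$-type of $\beta$ is the function $s \mapsto c(s \cup \{\beta\})$ on $[B_\xi]^{r+1}$. At a successor stage $\xi + 1$, the possible $B_{\xi+1}$-types number at most $\kappa^{|[B_{\xi+1}]^{r+1}|} \leq \kappa^\mu \leq 2^\mu$, which is strictly below $\lambda$; by regularity of $\lambda$, some single type class inside $H_\xi \setminus \{\alpha_\xi\}$ has cardinality $\lambda$, and I take that class as $H_{\xi+1}$. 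End-homogeneity of $A = \{\alpha_\xi : \xi < \mu^+\}$ is then immediate: any $s \in [A]^{r+1}$ lies inside some $B_\xi$, and all subsequent $\alpha_\eta$ (with $\eta \geq \xi$) belong to $H_\xi$ and therefore share a common $B_\xi$-type.

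The sole real obstacle is limit stages $\xi < \mu^+$ with $\cf(\xi) < \lambda$, since the naive intersection $\bigcap_{\eta < \xi} H_\eta$ may drop below cardinality $\lambda$ and cannot be used directly. My plan is to interleave the recursion with a continuous $\subseteq$-increasing chain $\langle M_\xi : \xi \leq \mu^+ \rangle$ of elementary substructures of $(H(\chi), \in, \triangleleft)$ for $\chi$ sufficiently large and a fixed global well-order $\triangleleft$, with $|M_\xi| = \mu$, $\mu + 1 \cup \{c, \lambda, \kappa\} \subseteq M_0$, $M_\xi \cap \mu^+$ an ordinal below $\mu^+$, and $M_\eta \in M_{\eta+1}$ (so that all data produced before stage $\eta+1$ lies in $M_{\eta+1}$). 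At a limit $\xi$, rather than intersecting, I would choose $\alpha_\xi$ to be the $\triangleleft$-least realizer above $\sup(M_\xi \cap \lambda)$ of the coherent $B_\xi$-type obtained by gluing the type restrictions locked in at each earlier stage, and then redefine $H_\xi$ as the corresponding type class above $\sup(M_\xi \cap \lambda)$. The delicate point is to verify by reflection inside $M_\xi$ that this redefined $H_\xi$ again has cardinality $\lambda$; this follows from the bound $|[B_\xi]^{r+1}| \leq \mu$ together with the regularity of $\lambda$ and the inequality $2^\mu < \lambda$, but setting up the elementary chain so that the reflection step goes through at every limit is the technically sensitive part of the argument.
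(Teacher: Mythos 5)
The paper does not actually prove this statement; it quotes the \Erdos--Rado\ theorem as a classical result with a citation, so there is no internal proof to compare against. Your skeleton is the standard one (induction on $r$; build an end-homogeneous set of size $\beth_r(\kappa)^+$ for the $(r+2)$-ary coloring; pass to the induced $(r+1)$-ary coloring and apply the induction hypothesis), and the base case, the successor-stage counting ($\kappa^{\mu}\le 2^{\mu}<\lambda$ with $\lambda$ regular), and the reduction from end-homogeneity to homogeneity are all correct.

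The limit stage, however, is a genuine gap, and the repair you sketch does not close it. The set of realizers of the glued type $t_\xi=\bigcup_{\eta<\xi}t_{\eta+1}$ is \emph{exactly} $\bigcap_{\eta<\xi}H_{\eta+1}$ (a $B_\xi$-type determines its realizer class, and truncating above $\sup(M_\xi\cap\lambda)$ only discards fewer than $\lambda$ points), so ``redefining $H_\xi$ as the type class'' is the same object you were worried about. The counting argument you invoke ($|[B_\xi]^{r+1}|\le\mu$, $2^{\mu}<\lambda$, $\lambda$ regular) shows only that \emph{some} $B_\xi$-type has $\lambda$ many realizers; it gives no control over the particular type you committed to at earlier successor stages, and a greedy linear choice really can die at a limit. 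Nor do elementary submodels of size $\mu$ help: the accumulated type $t_\xi$ is a subset of $M_\xi$ of size up to $\mu=|M_\xi|$, so there is no reason it is an \emph{element} of any $M_\xi$, which is what any reflection argument would need. The two standard ways to fix this are: (i) the ramification argument --- run the recursion simultaneously for every $\beta<\lambda$ (at each stage $\beta$ is sent to the least earlier-chosen ordinal realizing its current type, if any), and count that at most $2^{\mu}<\lambda$ many $\beta$ can have their process terminate before $\mu^+$, so some branch survives and is end-homogeneous; or (ii) take a \emph{single} $M\prec H(\chi)$ with $|M|=2^{\mu}$, closed under $\mu$-sequences, containing $c$ and $\lambda$, fix $\delta\in\lambda\setminus M$, and recursively choose $\alpha_\xi\in M$ realizing the $B_\xi$-type of $\delta$ (closure under $\mu$-sequences guarantees $B_\xi$ and that type lie in $M$, and elementarity produces a realizer in $M$ since $\delta$ is one outside). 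Either device replaces your limit step; as written, your argument does not go through.
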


\begin{defn}\label{defn.imagehom}
Let $\kappa<\lambda$  be given and let $\bP$ denote Cohen$(\kappa,\lambda)$.
We say that a subset $X\sse\bP$ is {\em image homogenized} if
\begin{enumerate}
\item[(a)]
All members of $X$ have domain with the same order-type: i.e., there is some 
$\zeta<\kappa$ such that 
 $\ot(\dom(p))=\zeta$ for all $p\in X$; and
\item[(b)]
For all $p,q\in X$
and $\xi<\zeta$,
if $\al<\lambda$ is the $\xi$-th element of $\dom(p)$ and $\beta<\lambda$ is the $\xi$-th element of $\dom(q)$,
then $p(\al)=q(\beta)$.
\end{enumerate}
\end{defn}

The following
 instance of 
Lemma 4.3 of \cite{Dobrinen/Hathaway16} 
allows us to assume only that  $\lambda\ge(\beth_{\mathbf{n}}(\kappa))^+$ in  (A)
of Lemma \ref{lem.e8.AimpliesB}.

\begin{lem}[\cite{Dobrinen/Hathaway16}]\label{lem.DH4.3}
Let  $1 \le n <\om$ be given
and  let $\kappa$ be a  strongly inaccessible cardinal satisfying 
$\kappa\ra(\mu_1)^{2n}_{\mu_2}$ for all $\mu_1,\mu_2<\kappa$.
Suppose that $\{p_{\vec{\alpha}}:\vec\alpha \in [\kappa]^n\}$
is an image homogenized set of conditions in 
the forcing $\bP= \mathrm{Cohen}(\kappa,\lambda)$, where $\lambda\ge \kappa$.
Then for each $\gamma < \kappa$ there is are sets 
$ K_i \subseteq \kappa$, $i< n$,
 such that
each 
$\ot(K_i)\ge\gamma$,
every element of
 $K_i$ is less than every element of $K_j$
whenever  $i < j < n$,
 and $\{p_{\vec{\alpha}}:\vec{\alpha}
 \in \prod_{i < n} K_i\}$ is a  pairwise compatible set of conditions.
\end{lem}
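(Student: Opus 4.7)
The plan is a partition-theoretic argument based on the hypothesis $\kappa \to (\mu_1)^{2n}_{\mu_2}$, combined with the positional analysis afforded by image homogenization. Let $\zeta < \kappa$ be the common order type of the domains $\dom(p_{\vec\alpha})$, let $e_{\vec\alpha}\colon \zeta \to \lambda$ enumerate $\dom(p_{\vec\alpha})$ in increasing order, and let $v\colon \zeta \to 2$ be the uniform value function with $p_{\vec\alpha}(e_{\vec\alpha}(\xi)) = v(\xi)$ for every $\vec\alpha$ and $\xi$. Then $p_{\vec\alpha}$ and $p_{\vec\beta}$ are compatible if and only if every positional collision $e_{\vec\alpha}(\xi) = e_{\vec\beta}(\eta)$ satisfies $v(\xi) = v(\eta)$.

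Fix $\gamma < \kappa$. For each ordered pair $(I, J)$ of $n$-subsets of $2n$, define a coloring of $[\kappa]^{2n}$ by
\begin{equation*}
c_{I,J}(\vec\sigma) \;=\; \{(\xi, \eta) \in \zeta \times \zeta : e_{\vec\alpha^I}(\xi) = e_{\vec\alpha^J}(\eta)\},
\end{equation*}
where $\vec\alpha^I, \vec\alpha^J$ are the $n$-subtuples of $\vec\sigma$ picked out by $I, J$. The product of these finitely many colorings has range of size less than $\kappa$. Applying $\kappa \to (\mu_1)^{2n}_{\mu_2}$ with $\mu_1 \ge n\gamma + n$ yields $H \subseteq \kappa$ of order type $\ge \mu_1$ simultaneously homogeneous for every $c_{I,J}$. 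Split $H$ into $n$ consecutive blocks $K_0 < K_1 < \cdots < K_{n-1}$ each of order type $\ge \gamma$, reserving $n$ further elements of $H$ as padding.

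To verify pairwise compatibility, take $\vec\alpha, \vec\beta \in \prod_{i<n} K_i$, extend $\vec\alpha \cup \vec\beta$ (of size at most $2n$) to a $2n$-element subset of $H$ using padding, and let $\vec\sigma$ enumerate it in increasing order; let $(I, J)$ be the index sets with $\vec\alpha = \vec\alpha^I$ and $\vec\beta = \vec\alpha^J$. Homogeneity fixes $c_{I,J}(\vec\sigma) = P \subseteq \zeta \times \zeta$ for every such $\vec\sigma$. If $P$ is contained in the diagonal $\{(\xi,\xi) : \xi < \zeta\}$, image homogenization yields compatibility immediately: any common element $e_{\vec\alpha}(\xi) = e_{\vec\beta}(\xi)$ is assigned value $v(\xi)$ by both conditions.

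The main obstacle is ruling out off-diagonal entries in $P$. Suppose $(\xi, \eta) \in P$ with $\xi \ne \eta$; then $e_{\vec\alpha^I}(\xi) = e_{\vec\alpha^J}(\eta)$ for every $\vec\sigma \in [H]^{2n}$. Fixing $\vec\alpha^J$ and varying the remaining coordinates of $\vec\sigma$ through $H$ forces $e_{\vec\alpha}(\xi)$ to equal a single value $\gamma^* \in \lambda$ across a large subfamily of $[H]^n$; symmetrically, using the homogeneity of $c_{J,I}$, one obtains $e_{\vec\beta}(\eta) = \gamma^*$ for $\vec\beta$ in a large family. A common $\vec\tau$ in the two subfamilies then places $\gamma^*$ in $\dom(p_{\vec\tau})$ at both positions $\xi$ and $\eta$, contradicting injectivity of $e_{\vec\tau}$. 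Hence $P$ is contained in the diagonal, and compatibility holds throughout $\prod_{i<n} K_i$.
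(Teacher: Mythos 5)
Your overall strategy---characterize incompatibility via positional collisions with mismatched values, colour $2n$-tuples by the collision pattern of each pair of $n$-subtuples, homogenize using $\kappa\to(\mu_1)^{2n}_{\mu_2}$, and then rule out off-diagonal collisions by injectivity of the domain enumerations---is exactly the strategy of the cited Lemma 4.3 of \cite{Dobrinen/Hathaway16} (the present paper does not reprove the lemma), and everything up to your last paragraph is sound, modulo cosmetics (here the domains live in $\lambda\times\bn$ and the values are nodes of a fixed length rather than elements of $2$, which changes nothing in the collision analysis).

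The gap is the phrase ``a common $\vec\tau$ in the two subfamilies.'' Such a $\vec\tau$ must occupy the $I$-positions relative to the fixed $\vec\beta$ \emph{and} the $J$-positions relative to the chosen $\vec\alpha_0$; equivalently, the three tuples $\vec\alpha_0,\vec\tau,\vec\beta$ must pairwise realize the pattern $(I,J)$. You never argue that such a configuration exists in $H$, and for an arbitrary pair $(I,J)$ it need not: for $n=3$ and the interleaving word $aababb$, any two $n$-tuples realizing the $a$-role against a common $\vec\beta$ each place two elements below $\min\vec\beta$, so their mutual pattern begins with a balanced block of length $4$ and can never be $aababb$. The argument is rescued by the fact that the only patterns you actually need are those realized by $\vec\alpha,\vec\beta\in\prod_{i<n}K_i$, namely concatenations $w_0w_1\cdots w_{n-1}$ with each $w_i\in\{ab,ba\}$ (suitably padded when coordinates coincide); for these one can pick $\vec\alpha,\vec\tau,\vec\beta$ coordinatewise monotone inside each $K_i$ so that all three pairs realize $(I,J)$, provided the blocks are chosen with enough room. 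It is then cleaner to finish as follows: the three instances of homogeneity give $e_{\vec\alpha}(\xi)=e_{\vec\beta}(\eta)$, $e_{\vec\alpha}(\xi)=e_{\vec\tau}(\eta)$, and $e_{\vec\beta}(\xi)=e_{\vec\tau}(\eta)$, whence $e_{\vec\beta}(\xi)=e_{\vec\beta}(\eta)$ and injectivity of $e_{\vec\beta}$ forces $\xi=\eta$, so every collision is diagonal and therefore value-preserving by image homogenization. With that repair, and with the restriction to block patterns made explicit, your proof is correct and coincides with the standard one.
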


\noindent{\it Proof 
of Lemma \ref{lem.e8.AimpliesB}}.
Assume  (A) and,
without loss of generality, assume 
the conditions of 
 $\bP$ have  the following form:
For $p\in \bP$, 
$p$ is a
function 
 from  some subset of $\lambda \times \bn$ of cardinality less than $\kappa$
 into $\kgtwo$,
 the size of $\dom(p) \cap (\lambda \times \{\ell\})$ is the same for all $\ell< \mathbf{n}$,
 $p(\alpha,\ell)$ extends $\eta^*_{\ell}$
 for each $(\alpha,\ell) \in \dom(p)$,
 and 
all nodes in $\ran(p)$ have the same length.

Let $\utilde{G}$ denote  the canonical name for the
 generic object forced by $\bP$  over $V$, 
 and let  $V_1$ denote $V[G]$.
We let $A_n$ denote $A_n(\kappa)$ and  $A_{n,m}$ denote $A_{n,m}(\kappa)$, and 
note that    $A_n$ and $A_{n,m}$ remain the same in $V$ and $V_1$.
Given  $\vec\al\in [\kappa]^{\bn}$,
we write  $\vec\al$ in increasing order as
$\{\al_i:i<\bn\}$.
 In $V_1$,
 given
  $\vec\al\in[\kappa]^{\bn}$,
 $1\le n\le \bn$,
  and $\gamma<\kappa$,
 let
 \begin{equation}
 G_n(\vec\al)\re \gamma:=\lgl G(\al_i,i)\re\gamma:i<n\rgl.
 \end{equation}

\begin{claim}\label{claim.Claim1}
In $V_1$,
given $\vec\al\in[\lambda]^{\bn}$
there is an $X_{\vec\al}\in\mathcal{D}$ and 
an integer $k_{n,m,\vec\al}<k_{n,m}$,
for all $1\le n\le \bn$ and
$m<\om$,
such that
$c_{n,m}$ is constant on  the set
\begin{equation}
 A_{n,m,\vec\al}:=\{(u,G_n(\vec\al)\re\gamma) \in A_{n,m}: u\in[X_{\vec\al}]^m\mathrm{\ and\ } \gamma\in X_{\vec\al}\},
\end{equation}
with value $k_{n,m,\vec\al}$.
\end{claim}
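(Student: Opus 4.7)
The plan is to reduce the claim to the standard Rowbottom--Prikry partition property of a normal ultrafilter on a measurable cardinal. Work inside $V_1 = V[G]$ and fix $\vec{\alpha} \in [\lambda]^{\bn}$; then for every $1 \le n \le \bn$ and $\gamma < \kappa$ the tuple $G_n(\vec{\alpha})\re\gamma$ is a well-defined element of ${}^n({}^{\gamma}2)$ in $V_1$. For each pair $(n,m)$ with $1 \le n \le \bn$ and $m < \om$, define a finite coloring
\[
d_{n,m} : [\kappa]^{m+1} \to k_{n,m}, \qquad d_{n,m}(v) := c_{n,m}\bigl(v \setminus \{\max v\},\, G_n(\vec{\alpha})\re\max v\bigr),
\]
noting that $v \setminus \{\max v\}$ lies in $[\max v]^m$, so the argument is a legitimate element of $A_{n,m}$.

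Since $\mathcal{D}$ is a normal ultrafilter on the measurable cardinal $\kappa$ in $V_1$, it has the partition property $\kappa \to (\kappa)^{m+1}_{k_{n,m}}$ with homogeneous sets in $\mathcal{D}$. Hence for each pair $(n,m)$ there is $X_{n,m} \in \mathcal{D}$ and a color $k_{n,m,\vec{\alpha}} < k_{n,m}$ on which $d_{n,m}$ is constantly $k_{n,m,\vec{\alpha}}$ on $[X_{n,m}]^{m+1}$. Setting $X_{\vec{\alpha}} := \bigcap_{n \le \bn,\, m < \om} X_{n,m}$ yields a member of $\mathcal{D}$ by its $\om_1$-completeness. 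For any $(u, G_n(\vec{\alpha})\re\gamma)$ witnessing membership in $A_{n,m,\vec{\alpha}}$, the set $v := u \cup \{\gamma\}$ belongs to $[X_{\vec{\alpha}}]^{m+1}$ with $\max v = \gamma$ (since $u \in [\gamma]^m$ forces $\max u < \gamma$), and therefore
\[
c_{n,m}(u, G_n(\vec{\alpha})\re\gamma) = d_{n,m}(v) = k_{n,m,\vec{\alpha}},
\]
as required. The degenerate case $m = 0$ is handled identically with $v = \{\gamma\}$ and the unary version of the partition property.

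There is essentially no obstacle at this step: the claim is a routine diagonal application of the finite-exponent partition property of a normal ultrafilter, and its sole purpose is to record, for each individual $\vec{\alpha}$, a homogeneous set $X_{\vec{\alpha}}$ and the values $k_{n,m,\vec{\alpha}}$. The substantive content of Lemma \ref{lem.e8.AimpliesB} will appear in the subsequent claims, where these $\vec{\alpha}$-dependent data must be reflected to a choice that is uniform across $\vec{\alpha}$; this is where the \Erdos--Rado theorem (Theorem \ref{thm.ER}) and the image-homogenization principle of Lemma \ref{lem.DH4.3} are invoked, and where the hypothesis $\lambda \ge (\beth_{\bn}(\kappa))^+$ becomes essential.
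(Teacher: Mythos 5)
Your proof is correct and takes essentially the same route as the paper's: both rest on the Rowbottom partition property of the normal ultrafilter $\mathcal{D}$ together with a countable (respectively diagonal) intersection to obtain a single $X_{\vec{\alpha}} \in \mathcal{D}$. The paper merely splits your one-step coloring of $(m{+}1)$-tuples into two stages (first homogenizing over $\gamma$ for each fixed $u$ using that the coloring is finite, then homogenizing over $u$ via the partition property on $[X]^{<\om}$), which is an inessential reorganization.
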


\begin{proof}
Work in $V_1$ and 
fix $\vec{\al}\in[\lambda]^{\bn}$.
Given  $1\le n\le \bn$, notice that
 since $G_n(\vec\al)\re\gamma$ is completely determined by $\gamma$,
 for each $u\in[\kappa]^{<\om}$
 the function
 $c_{n,|u|}(u,\cdot)$  restricted to  the set
 $ \{G_n(\vec{\al})\re\gamma:\max(u)<\gamma<\kappa\}$
  is essentially a function on the ordinals in the interval $(\max(u),\kappa)$.
Since $\mathcal{D}$ is a normal ultrafilter on $\kappa$,
there is a set $X_{n,u}\in \mathcal{D}$ such that
$c_{n,|u|}(u,\cdot)$ is constant on
$\{G_n(\vec{\al})\re\gamma:\gamma\in X_{n,u}\}$, say with value $k_{n,u,\vec{\al}}$.
Without loss of generality, we may assume that $\max(u)<\min(X_{n,u})$.

Now for  each $\beta<\kappa$,
let
\begin{equation}
X_\beta=\bigcap\{X_{n,u}:1\le n\le \bn,\
u\in[\beta]^{<\om}\},
\end{equation}
and let $X=\Delta_{\beta<\kappa}X_\beta$.
Since $\mathcal{D}$ is  normal,  $X$ is in $\mathcal{D}$. 
Without loss of generality, we may further  assume that all ordinals in  $X$ are limit ordinals.

Let  $u\in[\kappa]^{<\om}$ and 
  $\gamma\in X$ be given with
$\max(u)<\gamma$.
Since all members of $X$ are limit ordinals, 
 $\max(u)+1<\gamma$;
let  $\beta_u=\max(u)+1$.
Note that 
 $\gamma\in X_{\beta_u}$ since  
$\beta_u<\gamma$.
It follows that  for each $1\le n\le\bn$, 
$\gamma\in X_{\beta_u}\sse X_{n,u}$, 
which implies that $c_{n,|u|}(u,G_n(\vec{\al})\re\gamma)=k_{n,u,\vec\al}$.

For each  $1\le n\le\bn$, define a coloring $\psi_n:[X]^{<\om}\ra k$ by setting $\psi_n(u)=k_{n,u,\vec\al}$.
Then there is a $Y_n\sse X$ in $\mathcal{D}$ such that for each $m<\om$,
$\psi_n$ is constant on $[Y_n]^m$; denote its value by $k_{n,m,\vec\al}$.
Letting  $X_{\vec\al}=\bigcap_{1\le n\le\bn}Y_n$,
we see that for each $u\in [X_{\vec\al}]^{<\om}$,
$k_{n,u,\vec\al}=k_{n,|u|,\vec\al}$.
 \end{proof}

In $V$,
for each  $1\le n\le\bn$, $m<\om$, and $\vec\al\in[\lambda]^{\bn}$,
there are $\bP$-names  $\utilde{X}_{\vec\al}$ and $\utilde{A}_{\, n,m,\vec\al}$ for the sets $X_{\vec\al}$ and  $A_{n,m,\vec\al}$ guaranteed by
 Claim \ref{claim.Claim1},
and a condition
 $p_{\vec\al}\in \bP$  which forces the following:
``$\utilde{X}_{\vec\al}\in\utcD$ and for all $1\le n\le \bn$ and $m<\om$,
$\utilde{c}_{\, n,m}$ takes value $k_{n,m, \vec\al}$ on
the set $\utilde{A}_{n,m, \vec\al}$.''
Without loss of generality, we may assume that  $p_{\vec\al}$ forces
``$\lh(\ran(p_{\vec\al}))\in \utilde{X}_{\vec\al}$
and
$\utilde{c}_{\,n,m}(u,\ran(p_{\vec\al}))=k_{n,m,\vec\al}$ for all $u\in [\utilde{X}_{\vec\al}]^m$ with $\max(u)<\lh(\ran(p_{\vec\al}))$.''

Now we find an image homogenized collection of  $p_{\vec\al}$'s.
For $\vec\al\in[\lambda]^{\bn}$,
recall that
$\dom(p_{\vec\al})$ is a subset of $\lambda\times \bn$ of cardinality less than $\kappa$.
Fix a bijection $b:\lambda\times \bn\ra\lambda$.
For $\vec\al\in[\lambda]^{\bn}$,
let  $b[\ran(p_{\vec\al})]$ 
denote the range of  $p_{\vec\al}$ ordered as 
the sequence
$\lgl p_{\vec\al}(b^{-1}(\beta)):\beta\in 
b[\dom(p_{\vec\al})]\rgl$.
Let $f$  be the coloring on $[\lambda]^{\bn}$ into $\kappa$ many colors defined as follows:
For  $\vec\al\in [\lambda]^{\bn}$,
\begin{align}
f(\vec\al)=
&\ {\lgl  \ot(b[\dom(p_{\vec\al})])\rgl}^{\frown}
{\lh(\ran(p_{\vec\al}))}^{\frown}
b[\ran(p_{\vec\al})]^{\frown}\cr
&\ {\lgl k_{n,m,\vec\al}:1\le n\le \bn,\ m<\om\rgl}^{\frown} \lgl p_{\vec\al}(\al_\ell,\ell):\ell<\bn\rgl.
\end{align}

Since $\lambda\ra(\kappa)^{\bn}_{\kappa}$,
there are $J\in[\lambda]^{\kappa}$,
 $\gamma^*,\delta^*<\kappa$,
nodes $\lgl t^*_{\beta}:\beta<\delta^*\rgl$ in ${}^{\kappa>}2$,
$k_{n,m}^*<k_{n,m}$,
and
$\vec{\zeta^*}\in {}^{\bn}({}^{\gamma^*}2)$
with $\eta^*_{\ell}\trngl \zeta^*_{\ell}$ for each $\ell<\bn$
such that
for all $\vec{\alpha} \in [J]^{\bn}$,
the following hold:

\begin{enumerate}
\item
 $\ot(b[\dom(p_{\vec{\alpha}})])=\delta^*$;
\item
 $\lh(\ran(p_{\vec\al}))=\gamma^*$;
\item
$b[\ran(p_{\vec\al})]=\lgl t^*_{\beta}:\beta<\delta^*\rgl$;
\item
$k_{n,m,\vec\al}=k_{n,m}^*$, for each $1\le n\le \bn$ and
$m<\om$;
\item
$\lgl p_{\vec\al}(\al_\ell,\ell):\ell<\bn\rgl=
\vec{\zeta^*}$.
\end{enumerate}


In particular, $|J|=\kappa$, and the set $\{p_{\vec\al}:\vec\al\in[J]^\bn\}$ is image homogenized. 
By Lemma \ref{lem.DH4.3},
for each $\gamma<\kappa$, 
there are sets $K_n\sse J$, $n<\bn$, 
with $K_0<K_1<\dots<K_{\bn-1}$ such that each $\ot(K_n)\ge\gamma$ and, letting $\vec{K}$ denote $\prod_{n<\bn} K_n$,
the set $\{p_{\vec\al}:\vec\al\in\vec{K}\}$ is pairwise compatible.

\begin{claim}\label{claim.2}
In $V_1$,
there is a $D\in\mathcal{D}$,
 a strictly increasing function  $g:\kappa\ra\kappa$ with
 $\ran(g)\sse D$,
and a strong subtree  $T\sse \kgtwo$
  with splitting levels in $\ran(g)$
  such that given
  $1\le n\le\bn$ and
 $m<\om$,
  for all $u\in[D]^m$ and
  $\bar{\eta}\sse T$  in $A_n$ with
   $\max(u)<\lg(\bar{\eta})$, we have
$$
c_{n,m}(u,\bar\eta)=k_{n,m}^*.
$$
\end{claim}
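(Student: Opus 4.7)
The plan is to construct $(D,g,T)$ in $V_1$ by a recursion of length $\kappa$, arranging that every node $\eta$ of $T$ extending some $\eta^*_\ell$ has the form $G(\al,\ell)\re\lg(\eta)$ for a tracked label $\al\in J$. Color-constancy for a relevant tuple $(u,\bar\eta)$ then follows by extending its labels to a full $\vec\al^*\in[J]^\bn$ and invoking Claim \ref{claim.Claim1}. Normality of $\mathcal{D}$ absorbs label-dependence of the sets $X_{\vec\al}$ via a diagonal intersection, and the pairwise compatibility supplied by Lemma \ref{lem.DH4.3} together with density in $\bP$ furnishes fresh labels at each successor stage.

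I would first pass to a diagonally chosen $D^*\in\mathcal{D}$.  Fix a bijection $\pi:J\ra\kappa$ (possible since $|J|=\kappa$). For each $\xi<\kappa$ let
\[
Y_\xi:=\bigcap\{X_{\vec\al}:\vec\al\in[\pi^{-1}[\xi]]^\bn\}\in\mathcal{D}
\]
(an intersection of fewer than $\kappa$ sets from the $\kappa$-complete ultrafilter $\mathcal{D}$), and set $D^*:=\Delta_{\xi<\kappa}Y_\xi\in\mathcal{D}$. For any $\vec\al\in[J]^\bn$, writing $\xi(\vec\al):=\max\pi[\vec\al]+1$, we have $D^*\setminus\xi(\vec\al)\sse X_{\vec\al}$; intersecting with a club in $\mathcal{D}$ we may further assume $D^*$ is closed under suprema of bounded increasing sequences of its own elements.

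The main recursion builds $g:\kappa\ra\kappa$ strictly increasing and continuous with $\ran(g)\sse D^*$, a strong subtree $T^\beta:=T\cap{}^{g(\beta)\ge}2$ with splitting levels $\{g(\gamma):\gamma<\beta\}$, and an injection $\eta\mapsto\al^\eta$ from $\{\eta\in T^\beta:\exists\ell<\bn,\,\eta^*_\ell\trngl\eta\}$ into $J$ satisfying $G(\al^\eta,\ell)\re\lg(\eta)=\eta$ for the relevant $\ell$. At a successor $\beta+1$: for each $\nu\in L^\beta:=T^\beta\cap{}^{g(\beta)}2$ extending some $\eta^*_\ell$, pick two fresh labels $\al^0_\nu,\al^1_\nu\in J$ with $G(\al^i_\nu,\ell)\re g(\beta)=\nu$. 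These exist by a density argument in $\bP$: for any condition $q$ and any $\al'\in J$ with $(\al',\ell)\notin\dom(q)$, one extends $q$ by declaring $q'(\al',\ell)=\nu$ (together with balancing entries in the other coordinates $\ell'<\bn$ so that the domain-size constraint is respected), yielding a condition that forces $G(\al',\ell)\re g(\beta)=\nu$; hence in $V_1$ the set of $\al'\in J$ with this property is unbounded in $J$, and fewer than $\kappa$ labels have been used so far. Choose $g(\beta+1)\in D^*$ above $\xi(\vec\al^*)$ for every $\vec\al^*\in[J]^\bn$ whose coordinates have all been labeled by stage $\beta+1$, and large enough that $G(\al^0_\nu,\ell)\re g(\beta+1)\ne G(\al^1_\nu,\ell)\re g(\beta+1)$ for every labeled $\nu$; the two children of $\nu$ in $T$ are these two length-$g(\beta+1)$ initial segments, inheriting the labels $\al^0_\nu,\al^1_\nu$. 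Nodes of $L^\beta$ not extending any $\eta^*_\ell$ are given any two distinct length-$g(\beta+1)$ extensions. At limits $\beta$ take $g(\beta):=\sup_{\gamma<\beta}g(\gamma)$, which lies in $D^*$ by its closure property. Finally set $D:=\ran(g)$, a club in $\kappa$ hence in $\mathcal{D}$.

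Given $(u,\bar\eta)$ as in the conclusion, the labels $\al^{\eta_0},\dots,\al^{\eta_{n-1}}$ are distinct elements of $J$; extend to a tuple $\vec\al^*\in[J]^\bn$ using labels already tracked at level $\lg(\bar\eta)$. By the successor-stage choice of $g(\beta+1)$, $\lg(\bar\eta)\in D\sse D^*\setminus\xi(\vec\al^*)\sse X_{\vec\al^*}$, and similarly $u\sse X_{\vec\al^*}$ since $\max(u)<\lg(\bar\eta)$. Since $G_n(\vec\al^*)\re\lg(\bar\eta)=\bar\eta$ by construction, Claim \ref{claim.Claim1} yields $c_{n,m}(u,\bar\eta)=k_{n,m,\vec\al^*}=k^*_{n,m}$. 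The main technical obstacle is the successor-stage bookkeeping: maintaining freshness of labels across stages, synchronizing the divergence of labeled children at a common level $g(\beta+1)\in D^*$ above all relevant $\xi(\vec\al^*)$, and non-interference of the density extensions across the $\bn$ coordinates — this last point is precisely where the pairwise compatibility from Lemma \ref{lem.DH4.3} is needed, for it guarantees that inside any sufficiently wide product box $K_0<\dots<K_{\bn-1}$ in $J$ the conditions $p_{\vec\al}$ can be simultaneously extended to realize the required fresh labels in every coordinate without clashing.
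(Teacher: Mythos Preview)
Your approach diverges from the paper's in a way that creates two genuine gaps.

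\textbf{Label ordering.} Recall that $G_n(\vec\al)\re\gamma=\langle G(\al_i,i)\re\gamma:i<n\rangle$ where $\al_0<\cdots<\al_{\bn-1}$ is the \emph{increasing} enumeration of $\vec\al$. Thus the identity $G_n(\vec\al^*)\re\lg(\bar\eta)=\bar\eta$ that you invoke at the end requires that the label $\al^{\eta_\ell}$ attached to $\eta_\ell$ be the $\ell$-th smallest coordinate of $\vec\al^*$; in particular you need $\al^{\eta_0}<\al^{\eta_1}<\cdots<\al^{\eta_{n-1}}$ and then further labels above these in coordinates $n,\dots,\bn-1$. Your density argument picks fresh labels with no control on their relative order across coordinates, so nothing forces this. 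The paper avoids this entirely: at each stage it invokes Lemma~\ref{lem.DH4.3} to obtain ordered blocks $K_0<K_1<\cdots<K_{\bn-1}\sse J$ and places the level-$\ell$ labels inside $K_\ell$, so the ordering is built in.

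\textbf{The inclusion $u\sse X_{\vec\al^*}$.} Your upfront diagonal intersection gives only $D^*\setminus\xi(\vec\al^*)\sse X_{\vec\al^*}$. In the verification, $\vec\al^*$ consists of labels assigned at the stage $\beta'$ producing $\lg(\bar\eta)=g(\beta')$, whereas $u$ consists of values $g(\delta)$ with $\delta<\beta'$. There is no reason for $g(\delta)\ge\xi(\vec\al^*)$: the fresh labels chosen at stage $\beta'$ may have $\pi$-images far above $g(\delta)$. So your sentence ``similarly $u\sse X_{\vec\al^*}$ since $\max(u)<\lg(\bar\eta)$'' does not follow. This is the structural reason the paper does \emph{not} take a single diagonal intersection at the outset: instead it forms $X_\gamma=\bigcap_{\delta<\gamma}X_\delta\cap\bigcap_{\vec\al\in\vec K}X_{\vec\al}$ \emph{at each stage}, after the stage-$\gamma$ label blocks $\vec K$ have been selected, and then chooses $g(\gamma)$ inside $X_\gamma$; the nodes $r(\al_\ell,\ell)$ of $T(\gamma)$ are read off from a condition extending the compatible family $\{p_{\vec\al}:\vec\al\in\vec K\}$ rather than from the generic. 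Your final paragraph gestures at Lemma~\ref{lem.DH4.3} but never actually uses it in the recursion; the density argument you give does not interact with the $p_{\vec\al}$ at all, so the compatibility it provides is left idle.
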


\begin{proof}
The function $g$ will be constructed recursively and will give the levels of the strong subtree $T\sse {}^{\kappa>}2$ which is being built. 
For ease of notation,
we will let $T(\gamma)$ denote  ${}^{g(\gamma)}2\cap T$.

For the base case, let $u=\emptyset$.
 Take $\vec{\al}$ to be any increasing sequence in  $J$.
By Claim \ref{claim.Claim1} and following exposition,
we
may take $g(0)=\lg(\ran(p_{\vec\al}))\in X_{\vec\al}$
 so that for all $1\le n\le \bn$,
\begin{equation}
 c_{n,0}(\emptyset,\lgl p_{\vec\al}(\al_\ell,\ell):\ell<n\rgl)
=k_{n,0}^*.
 \end{equation}
 Let  $T(0)=\{ p_{\vec\al}(\al_\ell,\ell):\ell<\bn\}$
  and let  $X_0=X_{\vec\al}$.

Given $0<\gamma<\kappa$,
suppose that for all $\delta<\gamma$,
$g(\delta)$ and $T(\delta)$  have been defined and satisfy the claim.
 If $\gamma$ is a limit ordinal, let $B$ denote
 the set of branches through
 $\bigcup_{\delta<\gamma} T(\delta)$.
 If $\gamma$ is a successor ordinal, let $B$ denote the set of immediate successors in $\kgtwo$
 of the branches through
 $\bigcup_{\delta<\gamma} T(\delta)$.
Take sets  $K_\ell\sse J$, 
$\ell<\bn$,
such that $K_\ell$ has 
 the same cardinality as the set of nodes
 in $B$ extending $\eta^*_{\ell}$,
 $K_0<\cdots <K_{\bn-1}$,
 and
$\{p_{\vec\al}:\vec\al\in\vec{K}\}$ is pairwise compatible, where 
 $\vec{K}=\prod_{\ell<\bn} K_{\ell}$.
Let  $q$ be a condition in $\bP$
such that,
for each $\ell<\bn$,
$\{q(\beta,\ell): \beta\in K_{\ell}\}$ is in one-to-one correspondence  with $\{\eta\in B:\eta^*_{\ell}\trngl \eta\}$.
In particular, $q$
 extends $\bigcup\{p_{\vec\al}:\vec\al\in \vec{K}\}$.

 Let
 \begin{equation}
 X_{\gamma}=
\bigcap\{X_{\delta}:\delta<\gamma\}\cap \bigcap\{X_{\vec\al}:\vec\al\in\vec{K}\,\},
 \end{equation}
 noting that this set is in $\mathcal{D}$.
 Extend $q$ to some $r\in\bP$ so that $\lg(\ran(r))\in X_{\gamma}$
 and, utilizing Claim \ref{claim.Claim1} again,
 for each $m<\om$, 
 $u\in  [\{g(\delta):\delta<\gamma\}]^{m}$,
 and
 $\vec\al\in \vec{K}$,
 $c_{n,m}(u,\cdot)$ has color $k_{n,m,\vec\al}=k_{n,m}^*$ on
 $\lgl r(\al_\ell,\ell):\ell<n\rgl$.
 We let $g(\gamma)=\lh(\ran(r))$ and $T(\gamma)=\{r(\al_\ell,\ell):\ell<\bn$ and $\vec\al\in\vec{K}\}$.

 In this way, we construct $g$ and $T$ and a decreasing sequence of sets  $X_{\gamma}\in\mathcal{D}$.
Let $D=\Delta_{\gamma<\kappa}X_{\gamma}$.
Each $g(\gamma)\in X_{\gamma}\setminus (\gamma+1)$, so $\ran(g)\sse X$.
Then for each $1\le n\le \bn$,
for any $\gamma<\kappa$
and
$u\in[\{ g(\delta):  \delta<\gamma\}]^{<\om}$,
 for each $\bar\eta\in A_n\cap T$ with
 $\lg(\bar\eta)\ge g(\gamma)$,
we have $c_{n,m}(u,\bar\eta)=k^*_{n,m}$.
 \end{proof}

Letting $\utilde{D}$ be a $\bP$-name for $D$, and letting $\utilde{h}$ be a $\bP$-name for the tree isomorphism from ${}^{<\kappa}2$ to $T$ 
 finishes the proof of  ($\beta$) of Lemma \ref{lem.e8.AimpliesB}.
\hfill$\square$

\begin{rem}
In fact, we get more than the Lemma claims:
The statement  $(\beta)$ actually
 holds for  all $(u,\bar{\eta})\in A_{n,m}$ with $u\sse \utilde{D}$ and $\bar{\eta}$ with  $\lg(\eta_\ell)\in \utilde{D}$  (rather than just $\lg(\eta_\ell)\in \ran(\utilde{g})$).
\end{rem}

We now restate Theorem \ref{thm.1beta2}
and prove it.

\begin{thm1.5}
Let
$1\le \bn<\om$ and $2\le k<\om$ be given.
Suppose
$\lambda\ge (\beth_{\bn}(\kappa))^+$ and that   $\kappa$ is measurable in the generic extension  via
$\bP=$
Cohen$(\kappa,\lambda)$  forcing.
 Let $\utilde{\bQ}$ be a $\bP$-name for Prikry forcing.
Then
$\HL_{n,k}(\kappa)$ holds for all  $1\le n\le \bn$ 
in the
generic extension  forced by $\bP * \utilde{\bQ}$.
\end{thm1.5}

\begin{proof}
Let
$1\le \bn<\om$ and $2\le k<\om$ be given.
Suppose
$\lambda\ge (\beth_{\bn}(\kappa))^+$ and that   $\kappa$ is measurable in the generic extension  via
$\bP=$
Cohen$(\kappa,\lambda)$  forcing.
Let $\utcD$ be a $\bP$-name in $V$ for a normal ultrafilter on $\check{\kappa}$.
Let $G$ be $\bP$-generic over $V$ and let $V_1$ denote $V[G]$.
In  $V_1$,  let $\bQ$ denote Prikry forcing with tails in $\mathcal{D}$.
Let  $H$ be
$\bQ$-generic over $V_1$, and let
 $V_2$ denote $ V_1[H]$.

Notice that    $A_n$ and $A_{n,m}$ remain the same in $V$, $V_1$, and $V_2$.
In $V_2$, for  each $1\le n\le \bn$,  let $c_n:A_n\ra k$ be a function.
In $V_1$,
 let $\utilde{c}_{\,n}$ be a $\bQ$-name for $c_n$.
For each
 $m<\om$,  
define in $V_1$
 a function $c_{n,m}:A_{n,m}\ra k$ by
\begin{equation}
 c_{n,m}(u,\bar{\eta})=j\Llra \exists X\in\mathcal{D}\
 (u,X)\forces_{\bQ} \utilde{c}_{\,n}(\bar{\eta})=j.
\end{equation}
Let $\utilde{c}_{\,n,m}$ be a $\bP$-name in $V$ for
$c_{n,m}$.
By Lemma \ref{lem.e8.AimpliesB},
there are $\bP$-names 
 $(\utilde{g},\utilde{h})$, $\utilde{T}$, 
and  $\utilde{D}$ in $V$ 
and  integers  $k^*_{n,m}$ 
 denoting the value in $V_1$ of 
$c_{n,m}(u,\bar{\eta})$ for 
$(u,\bar{\eta})\in A_{n,m}$ with 
$\bar{\eta}$ from $T$.

 \begin{claim}\label{claim.Claim3}
 In $V_1$,
there is an $E\sse D$ in $\mathcal{D}$ such that for each
$u\in[E]^{<\om}$ with $\max(u)<\al\in E$ and each $1\le n\le \bn$,
we have
$$
(u,E\setminus (\al+1))\forces_{\bQ} \utilde{c}_{\, n}(\bar\eta)=k^*_{n,|u|},
$$
for each $\bar\eta\in A_n\cap T$ such $\max(u)<\lg(\bar\eta)<\al$.
\end{claim}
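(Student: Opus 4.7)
The plan is a standard diagonal-intersection argument that upgrades the pointwise witnesses provided by the definition of $c_{\,n,m}$ into a single tail set $E$ that works uniformly for every eligible stem and every $\bar\eta$. Specifically, for each $u\in[D]^{<\om}$, each $1\le n\le\bn$, and each $\bar\eta\in A_n\cap T$ with $\max(u)<\lg(\bar\eta)$, Claim \ref{claim.2} gives $c_{\,n,|u|}(u,\bar\eta)=k^*_{n,|u|}$, so by the very definition of $c_{\,n,|u|}$ we may pick
$$X_{u,n,\bar\eta}\in\mathcal{D},\qquad X_{u,n,\bar\eta}\sse D\setminus(\max(u)+1),\qquad (u,X_{u,n,\bar\eta})\forces_{\bQ}\utilde{c}_{\,n}(\bar\eta)=k^*_{n,|u|}.$$

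For each $\beta<\kappa$, set
$$Y_\beta=\bigcap\bigl\{X_{u,n,\bar\eta}\;:\;u\in[\beta]^{<\om},\ 1\le n\le\bn,\ \bar\eta\in A_n\cap T,\ \lg(\bar\eta)\le\beta\bigr\}.$$
Since $\kappa$ is inaccessible in $V_1$ and $T$ is a strong subtree, the number of tuples $\bar\eta\in T$ of length at most $\beta$ is bounded by $2^{g(\beta)}<\kappa$, so the intersection has fewer than $\kappa$ terms and $\kappa$-completeness of $\mathcal{D}$ gives $Y_\beta\in\mathcal{D}$. Let $E=D\cap\Delta_{\beta<\kappa}Y_\beta$, which belongs to $\mathcal{D}$ by normality.

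To verify the claim, fix $u\in[E]^{<\om}$, $\al\in E$ with $\max(u)<\al$, some $1\le n\le\bn$, and $\bar\eta\in A_n\cap T$ with $\max(u)<\lg(\bar\eta)<\al$. Then $X_{u,n,\bar\eta}$ is one of the sets appearing in the intersection defining $Y_\al$, because $u\sse\al$ and $\lg(\bar\eta)\le\al$. For every $\gamma\in E$ with $\gamma>\al$, the definition of diagonal intersection yields $\gamma\in Y_\al$, hence $\gamma\in X_{u,n,\bar\eta}$; thus $E\setminus(\al+1)\sse X_{u,n,\bar\eta}$. Consequently $(u,E\setminus(\al+1))$ is a $\bQ$-extension of $(u,X_{u,n,\bar\eta})$ and therefore forces $\utilde{c}_{\,n}(\bar\eta)=k^*_{n,|u|}$.

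The only genuine obstacle is the bookkeeping: one must index the witness sets $X_{u,n,\bar\eta}$ so that (i) each $Y_\beta$ is an intersection of fewer than $\kappa$ sets (so that $\kappa$-completeness, rather than a stronger property, suffices) and (ii) once one forms the diagonal intersection, every $\gamma\in E$ above $\al$ automatically lies in each relevant witness $X_{u,n,\bar\eta}$ with $u\sse\al$ and $\lg(\bar\eta)\le\al$. With the bound $|u|,\lg(\bar\eta)\le\beta$ built into $Y_\beta$ as above, both requirements are immediate and the Prikry extension relation does the rest.
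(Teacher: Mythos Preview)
Your argument is correct and is essentially the same diagonal-intersection proof the paper gives: the paper first forms $Y_{\alpha,u}$ by intersecting the witness sets over all admissible $\bar\eta$ with $\lg(\bar\eta)<\alpha$, then intersects over $u\in[\alpha]^{<\om}$ to get $Y_\alpha$, and finally sets $E=\Delta_{\alpha<\kappa}Y_\alpha$; you simply collapse the first two steps into one. One small bookkeeping slip: you define $X_{u,n,\bar\eta}$ only for $u\in[D]^{<\om}$ but then index the intersection defining $Y_\beta$ over all $u\in[\beta]^{<\om}$; restricting that index set to $u\in[D\cap\beta]^{<\om}$ (which is all the verification needs, since $E\sse D$) fixes this harmlessly.
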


\begin{proof}
 Given $\al<\kappa$ and 
 $u\in[\al]^{<\om}$,
let $Y_{\al,u}\sse D$ be in $\mathcal{D}$ so that
for all 
 $1\le n\le \bn$ and
$\bar\eta\in A_n\cap T$ with $\max(u)<\lg(\bar\eta)<\al$,
\begin{equation}\label{eq.uY}
(u,Y_{\al,u})\forces_{\bQ} \utilde{c}_{\, n}(\bar\eta)  =
c_n(u,\bar\eta).
\end{equation}
By Claim \ref{claim.2},
the value of $c_n(u,\eta)$ in equation (\ref{eq.uY}) is
 $k^*_{n, |u|}$.
Let $Y_{\al}=\bigcap\{Y_{\al,u}:u\in [\al]^{<\om}\}$,
and  let $E=\Delta_{\al<\kappa} Y_{\al}$.
Then for each $m<\om$,  whenever
$\al\in E$  and
$u\in[E\cap \al]^m$,
 we have  $E\setminus (\al+1)\sse Y_{\al,u}$.
 Thus,
$(u,E\setminus(\al+1))\forces_{\bQ} \utilde{c}_{\,n}(\bar\eta)=k^*_{n,m}$,
for all $\bar\eta$ in $T$ with $\lg(\bar\eta)<\al$.
\end{proof}

 In $V_2$, let $\bar{\lambda}=\lgl\lambda_i:i<\om\rgl$ be the generic Prikry generic sequence over $V_1$ given  by $H$.
By genericity, for each $Z\in\mathcal{D}$ in $V_1$,
 all but finitely many members of $\bar\lambda$ are contained in $Z$.
 In particular, $\bar\lambda\sse^* E$, so without loss of generality 
  we may assume
  $\bar\lambda\sse E$.
We may assume (by genericity) that the Prikry sequence has the property that
$|\lambda_{i+1}\setminus \lambda_i|$ is strictly increasing with limit $\kappa$.
Moreover,
we may assume that the sequence
$|\ran(g)\cap(\lambda_i,\lambda_{i+1})|$ is strictly increasing with limit $\kappa$.

 In $V_2$, we now construct a strong subtree $U$ of $T$
 with $\kappa$ many levels so that
 $c_n$ is constant  for each $1\le n\le \bn$.
By Claim \ref{claim.Claim3},
for all $1\le n\le\bn$ and all $\bar\eta\in A_n\cap T$ with $\lg(\bar\eta)<\lambda_0$,
we have
$(\emptyset, D\setminus( \lambda_0+1))\forces_{\bQ} \utilde{c}_{\, n}(\bar\eta)=k^*_{\emptyset, n}$.
In general, given $i<\om$,
for all $1\le n\le\bn$ and all $\bar\eta\in A_n\cap T$ with $\lg(\bar\eta)\in (\lambda_i,\lambda_{i+1})$,
we have
$(\{\lambda_0,\dots,\lambda_{i}\}, Y\setminus( \lambda_{i+1}+1))\forces_{\bQ} \utilde{c}_{\, n}(\bar\eta)=k^*_{n, i+1}$.
 Since each $k^*_{n,i}<k$, in $V_2$
 there is a set $I\in [\om]^{\om}$ such that
for all $1\le n\le\bn$,
 $c_n$ is constant for all $\bar\eta$  in $T$ with length in $\bigcup_{i\in I}(\lambda_i,\lambda_{i+1})$.
Then we may take a strong subtree $U$ of $T$ which has splitting levels in 
$\bigcup_{i\in I}(\lambda_i,\lambda_{i+1})$, so that $U$ has $\kappa$ many splitting levels.
Then this tree $U$ witnesses that HL$_{n,k}(\kappa)$ holds in $V_2$ for each $1\le n\le \bn$.
\end{proof}

\begin{rem}\label{rem.gamma}
If  the colorings  $c_n:A_n\ra k$ are in $V$,
then  the functions $\utilde{h}$ and $\utilde{g}$
in
conclusion of Lemma \ref{lem.e8.AimpliesB}
 can be found in $V$.
Hence, the strong subtree $T$ witnessing $\HL_{n,k}(\kappa)$ for  such $c_n$ in  Theorem \ref{thm.1beta2}
exists in $V$. 
\end{rem}

We now consider a version of Halpern--\Lauchli\ for infinite colorings.
For
 $2\le \theta_0\le \theta_1$,
  let 
$\HL_{n,(1,\theta_0,\theta_1)}[\kappa]$
 abbreviate the following statement:
Given a coloring
$c:\bigcup\{{}^n({}^{\al}2):\al<\kappa\}\ra \theta_1$,
then there is a suitable triple $(\kappa,T,A)$ and a  subset 
$u\sse \theta_1$ 
such that 
$ |u|<\theta_0$ and 
for all
$\al\in A$ and 
$\overline{\nu}=(\nu_0,\dots\nu_{n-1})$ with
$\eta_\ell^*\trngl \nu_\ell\in T\cap {}^{\al}2$
for each $\ell<n$,
then
$c(\overline{\nu})\in u$.

A minor  straightforward  modification of the proof of Theorem \ref{thm.1beta2}
yields the following theorem.

\begin{thm}\label{thm.2.8}
Under the assumptions of Theorem \ref{thm.1beta2},
if $\aleph_0<\theta<\kappa$,
we get that 
$\forces_{\bP*\utilde{\bQ}}
\HL_{n,(1,\aleph_1,\theta)}(\kappa)$.
\end{thm}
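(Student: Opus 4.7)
The plan is to run the proof of Theorem \ref{thm.1beta2} essentially verbatim, observing that the two ingredients that constrained the colors to be finite can both be relaxed to ``fewer than $\kappa$'' provided we settle for a countable trace on the final homogeneous tree.

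First I would revisit Lemma \ref{lem.e8.AimpliesB} and check that its proof goes through when the coloring $\utilde{c}_{\,n,m}$ takes values in $\theta$, where $\aleph_0<\theta<\kappa$, rather than in a finite $k_{n,m}$. The internal step of Claim 1 only uses that $\mathcal{D}$ is a normal ultrafilter on $\kappa$, hence $\kappa$-complete and having the partition property $\kappa\to(\kappa)^{<\om}_{<\kappa}$, so the stabilizations producing $X_{n,u}$, $X$, $Y_n$ and ultimately $X_{\vec\al}$ all survive. The subsequent \Erdos--Rado step colors $[\lambda]^\bn$ into $\le\kappa$ many colors (the tuple $f(\vec\al)$), which is the same coloring size as before and needs only $\lambda\ge(\beth_\bn(\kappa))^+$. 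Thus Lemma \ref{lem.e8.AimpliesB} delivers $(\utilde{g},\utilde{h})$, $\utilde{T}$, $\utilde{D}$ and values $k^*_{n,m}<\theta$. Claim \ref{claim.Claim3} is insensitive to the number of colors and yields a $\bQ$-dense set $E\in\mathcal{D}$ such that $(u,E\setminus(\al+1))\forces_{\bQ}\utilde{c}_{\,n}(\bar\eta)=k^*_{n,|u|}$ for each $u\in[E]^{<\om}$, $\al\in E$ with $\max(u)<\al$, and $\bar\eta\in A_n\cap T$ with $\max(u)<\lg(\bar\eta)<\al$.

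Second, I would pass to $V_2$ and use the generic Prikry sequence $\bar\lambda=\lgl\lambda_i:i<\om\rgl\sse E$. Exactly as in the proof of Theorem \ref{thm.1beta2}, for every $i<\om$ and every $\bar\eta\in A_n\cap T$ with $\lg(\bar\eta)\in(\lambda_i,\lambda_{i+1})$ we have $c_n(\bar\eta)=k^*_{n,i+1}$, and similarly $c_n(\bar\eta)=k^*_{n,0}$ when $\lg(\bar\eta)<\lambda_0$. The key observation is that the entire set of colors realized on $T$ at levels outside $\{\lambda_i:i<\om\}$ is contained in the countable set
\begin{equation*}
u_n=\{k^*_{n,i}:i<\om\}\sse\theta,
\end{equation*}
so in particular $|u_n|<\aleph_1$. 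Since by the genericity assumptions $|\ran(\utilde{g})\cap(\lambda_i,\lambda_{i+1})|$ tends to $\kappa$, we can thin $T$ to a strong subtree $U$ whose set of splitting levels $A$ lies in $\ran(\utilde{g})\setminus\{\lambda_i:i<\om\}$ and still has order-type $\kappa$. Then $(\kappa,U,A)$ is a suitable triple and $c_n(\bar\nu)\in u_n$ for every admissible $\bar\nu\in U\cap{}^{\alpha}2$, $\alpha\in A$, witnessing $\HL_{n,(1,\aleph_1,\theta)}[\kappa]$.

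The only genuine check is that Lemma \ref{lem.e8.AimpliesB} is truly color-size agnostic below $\kappa$, i.e.\ that the two Ramsey-theoretic stabilizations inside Claim 1 (normality of $\mathcal{D}$ and the partition property for $[\kappa]^{<\om}$) and the \Erdos--Rado application still produce a color $k^*_{n,m}<\theta$ rather than merely a color in $\kappa$. This is the main place where one has to resist the temptation that $\theta$ needs to be finite; after that verification, the Prikry argument is handed a countable enumeration $(k^*_{n,i})_{i<\om}$ of candidate colors for free, and the passage from ``finitely many colors, choose a cofinite subset $I\sse\om$'' to ``countably many colors, take $u_n=\{k^*_{n,i}:i<\om\}$'' is automatic.
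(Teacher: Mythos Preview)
Your proposal is correct and follows exactly the route the paper intends: the paper's own proof of Theorem \ref{thm.2.8} consists only of the sentence ``A minor straightforward modification of the proof of Theorem \ref{thm.1beta2} yields the following theorem,'' and you have correctly identified and verified the two modifications needed---that Lemma \ref{lem.e8.AimpliesB} goes through for $\theta<\kappa$ colors (via $\kappa$-completeness and the Rowbottom partition property of $\mathcal{D}$, with the \Erdos--Rado step still coloring into at most $\kappa$ colors), and that in the final Prikry stage one simply collects the countable set $u_n=\{k^*_{n,i}:i<\om\}$ rather than passing to an infinite $I\sse\om$ on which the $k^*_{n,i}$ stabilize.
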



\section{Consistent Failures of Halpern--\Lauchli}

This section provides conditions under which various versions of  the Halpern--\Lauchli\ Theorem  fail. 
Our first theorem provides conditions which imply strong failure of Halpern--\Lauchli;  that is, failure of $\HL^+_{2,\theta}[\kappa]$ for all $2\le\theta<\kappa$.
For this, we will use 
  negative  square bracket partition relations.
Given  cardinals $1\le n<\om$ and $\theta,\mu\le \kappa$,
the square bracket partition relation
\begin{equation}
\kappa\ra [\mu]^n_{\theta}
\end{equation}
holds if
for every function $c:[\kappa]^n\ra\theta$,
there is a subset $A\sse \kappa$ with $|A|=\mu$ such that $\ran(c\re[A]^n)$ is a proper subset of $\theta$.
The negation
\begin{equation}
\kappa\nrightarrow [\mu]^n_{\theta}
\end{equation}
holds if there is a function $c:[\kappa]^n\ra\theta$ so that for each subset $A\sse \kappa$ with $|A|=\mu$,
 $\ran(c\re[A]^n)=\theta$.
The following lemma will aid in the proof of 
Theorem \ref{nsbpr->negHL}.

\begin{lem}\label{lem.nsbpr->negHL}
Suppose $\kappa$ is a strong limit cardinal and
either
\begin{enumerate}
\item[(a)]
$\kappa$ is strongly inaccessible, $\theta<\kappa$,
and
$\kappa\nrightarrow [\kappa]^2_{\theta}$; or
\item[(b)]
$\theta<\cf(\kappa)$ and
$\cf(\kappa)\nrightarrow [\cf(\kappa)]^2_{\theta}$.
\end{enumerate}
Then
$\HL^+_{2,\theta}[\kappa]$ fails.
\end{lem}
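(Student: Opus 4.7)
The plan is to produce an explicit coloring $c$ and well-ordering sequence $\bar{<}$ that defeat $\HL^+_{2,\theta}[\kappa]$, with the color of a pair depending on only two ordinal invariants: the common length and the length of the longest common initial segment. Fix the negative square-bracket witness $d$ provided by the hypothesis (so $d:[\kappa]^2 \to \theta$ in case (a), and $d:[\cf(\kappa)]^2 \to \theta$ in case (b)). In case (b), fix a strictly increasing cofinal sequence $\lgl \kappa_i : i < \cf(\kappa)\rgl$ in $\kappa$ and set $\phi(\alpha) = \min\{i : \alpha < \kappa_i\}$; in case (a), let $\phi$ be the identity on $\kappa$. Declare each $<_\alpha$ to be lexicographic on ${}^\alpha 2$, and, writing $\nu_0\wedge\nu_1$ for the longest common initial segment, set $c(\{\nu_0,\nu_1\}) = d(\{\phi(\lg(\nu_0)),\phi(\lg(\nu_0\wedge\nu_1))\})$ whenever the two entries differ, assigning a fixed default color otherwise.

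Given any suitable triple $(T,A)$, I will locate distinct $\eta_0,\eta_1 \in T$ at a common level whose set of $<$-increasing extensions at levels in $A$, call it $V(\eta_0,\eta_1)$, realizes every color of $\theta$, contradicting that $(T,A)$ witnesses $\HL^+_{2,\theta}[\kappa]$. The key observation is that if $\mu\in T$ is a splitting node and $\eta_\ell = \mu^{\frown}\lgl\ell\rgl$, then every $(\nu_0,\nu_1)\in V(\eta_0,\eta_1)$ has $\nu_0\wedge\nu_1 = \mu$, and lexicographic order automatically ensures $\nu_0 <_\beta \nu_1$ for such a pair at any level $\beta$. Hence $c(\{\nu_0,\nu_1\}) = d(\{\phi(\beta),\phi(\lg\mu)\})$, and by regularity of $T$ we realize this value at every $\beta\in A$ with $\phi(\beta)>\phi(\lg\mu)$.

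The argument then reduces to a \emph{Row Claim}: there is $\gamma^* \in A' := \phi[A]$ such that $\{d(\{\phi(\beta),\gamma^*\}) : \beta\in A,\ \phi(\beta)>\gamma^*\} = \theta$. Granting this, pick $\delta\in A$ with $\phi(\delta)=\gamma^*$ and take any $\mu\in T\cap{}^\delta 2$ (which exists by regularity); since $\delta\in A$, $\mu$ splits, yielding the desired $\eta_0,\eta_1$ and the contradiction.

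I expect the Row Claim to be the main obstacle, and the plan for it is a pigeonhole/contradiction using the negative partition hypothesis. If no such $\gamma^*$ exists, pick for each $\gamma\in A'$ a color $i(\gamma)$ missed in its row above $\gamma$ inside $A'$. Since $|A'|$ (which is $\kappa$ in case (a), $\cf(\kappa)$ in case (b)) is regular and exceeds $\theta$, pigeonhole yields $i^*<\theta$ and $A'_0\sse A'$ of the same cardinality with $i\equiv i^*$ on $A'_0$. Then for all $\gamma<\gamma'$ in $A'_0$ one has $d(\{\gamma,\gamma'\})\ne i^*$, so $d$ omits $i^*$ on $[A'_0]^2$, contradicting $\kappa\not\to[\kappa]^2_\theta$ (respectively $\cf(\kappa)\not\to[\cf(\kappa)]^2_\theta$). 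In case (b), minor care is needed to ensure that cofinally many $\beta\in A$ satisfy $\phi(\beta)>\phi(\lg\mu)$ (i.e., $\beta\ge\kappa_{\gamma^*+1}$), which holds since $A$ is unbounded in $\kappa$ and $\cf(\kappa)>\theta$.
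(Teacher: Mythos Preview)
Your argument is correct and follows essentially the same approach as the paper: define the coloring so that $c(\{\nu_0,\nu_1\})$ depends only on the level and the length of the meet, then prove the ``Row Claim'' (the paper's Claim~\ref{claim.4}/\ref{claim.5}) by pigeonhole against the negative square-bracket witness. Your unified treatment of (a) and (b) via $\phi$ is a cosmetic improvement over the paper's separate arguments.

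One genuine simplification: by fixing $<_\alpha$ to be lexicographic you guarantee automatically that $\nu_0<_\beta\nu_1$ whenever $\nu_\ell$ extends $\mu^\frown\langle\ell\rangle$, so you avoid the paper's device of thinning $A$ to an $A'$ with $\omega$ many members of $A$ between consecutive elements (which the paper uses to ensure $2^\omega$ many extensions and hence a pair in the right $<_\beta$-order for an \emph{arbitrary} well-ordering). The paper's version is nominally stronger in that its coloring defeats every choice of $\overline{<}$, but for the lemma as stated your choice suffices.

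One small gap to patch: you assert that the value $d(\{\phi(\beta),\gamma^*\})$ is realized at \emph{every} $\beta\in A$ with $\phi(\beta)>\gamma^*$, but since $\eta_\ell$ lives at level $\delta+1$ you need $\beta>\delta+1$ for a strict extension $\eta_\ell\lhd\nu_\ell$ to exist. In case~(a) this excludes $\beta=\delta+1$, which could in principle be the unique level realizing some color. The fix is immediate: run the Row Claim on any $A''\in[A]^\kappa$ with no two consecutive ordinals (so $\beta\in A''$, $\beta>\gamma^*$ forces $\beta>\gamma^*+1$), then use levels $\beta\in A''\subseteq A$ for the $\nu_\ell$'s.
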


\begin{proof}
To prove (a),  suppose $\kappa$ is strongly inaccessible and  $\theta<\kappa$,  and
let $c:[\kappa]^2\ra\theta$ be a function witnessing
$\kappa\nrightarrow [\kappa]^2_{\theta}$.

\begin{claim}\label{claim.4}
For each $A\in[\kappa]^{\kappa}$, there is an $\al\in A$ such that  $c$ has range $\theta$ on the set
 $\{\{\al,\beta\}:\beta\in A\cap (\al,\kappa)\}$.
\end{claim}

\begin{proof}
Suppose not.
Then 
there is an $A\in[\kappa]^{\kappa}$ such that 
for each $\al\in A$, 
there is some  ordinal $e(\al)\in\theta$ which is not in 
the  range of $c$ on the set
$\{\{\al,\beta\}: \beta\in A\cap(\al,\kappa)\}$.
Since $\theta<\kappa$, there is an $A'\in [A]^{\kappa}$ such that $e$ is constant on $A'$.
But then $\ran(c\re [A']^2)\ne\theta$,  contradicting that $c$ witnesses 
$\kappa\ra[\kappa]^2_{\theta}$.
\end{proof}

Let  $\overline{<}$ be any sequence of well-orderings of
the levels of ${}^{\kappa>}2$.  
Define  the function
$d:\bigcup_{\al<\kappa}[{}^{\al}2]^2\ra \theta$  by
\begin{equation}
\{\nu_0,\nu_1\}\in [{}^{\al}2]^2\ \Lra\ 
d(\{\nu_0,\nu_1\})=c(\{\lg(\nu_0\cap\nu_1),\al\}).
\end{equation}
Let $(\kappa,T,A)$ be a suitable triple.
Take $A'\in [A]^{\kappa}$ such that between any two consecutive ordinals in $A'$, there are $\om$ many ordinals in $A$.
Fix $\al\in A'$ as in Claim \ref{claim.4}, and 
fix any node $\eta\in T\cap {}^{\al}2$ and distinct
nodes  $\eta_0,\eta_1 \in 
{}^{\al+1}2$
such that $\eta_0\cap\eta_1=\eta$.
For each
 $\beta\in A'\cap (\al,\kappa)$ there are at least $2^{\om}$ many pairs of distinct  nodes $\nu_0,\nu_1$ in $T\cap{}^{\beta}2$
such that $\eta_0\lhd \nu_0$ and 
$\eta_1\lhd\nu_1$
and $\nu_0<_{\beta}\nu_1$.
By Claim \ref{claim.4},
$d$ has range $\theta$ on the set 
\begin{equation}
\{\{\nu_0,\nu_1\}\in [T]^2:
\eta_0\lhd \nu_0,\ \eta_1\lhd\nu_1,\  \lg(\nu_0)=\lg(\nu_1)\in A',
\mathrm{\ and\ }
\nu_0<_{\lg(\nu_0)}\nu_1
\}.
\end{equation}
Thus,  $\HL^+_{2,\theta}[\kappa]$ fails.


The proof of  (b) is similar.
Suppose $\kappa$ is a strong limit cardinal,  let $\mu=\cf(\kappa)$, and let
 $c:[\mu]^2\ra\theta$ be a function witnessing that
$\mu\nrightarrow[\mu]^2_{\theta}$.
Let $\lgl \kappa_i:i<\mu\rgl$ be an increasing continuous sequence
with limit $\kappa$, and assume $\kappa_0=0$.
Define $h:\kappa\ra\mu$ by
\begin{equation}
h(\al)=i \Longleftrightarrow
\al\in[\kappa_i,\kappa_{i+1}).
\end{equation}
A proof similar to the one given for   Claim \ref{claim.4} yields the following:

\begin{claim}\label{claim.5}
Given $B\in[\mu]^{\mu}$,
there is an $a\in B$ such that $c$ has range $\theta$ on the set $\{\{a,b\}:b\in B\cap(a,\mu)\}$.
\end{claim}

Define the function 
 $d:\bigcup_{\al<\kappa}[{}^\al 2]^2\ra\theta$ by
\begin{equation}
\{\eta,\nu\} \in[{}^{\al}2]^2\Longrightarrow
d(\{\eta,\nu\})=c(\{h(\lg(\eta\cap\nu)),h(\al)\})
\end{equation}
if $\lg(\eta\cap\nu)$ and $\al$ are in different intervals of the partition $[\kappa_i,\kappa_{i+1})$  ($i<\mu$)  of $\kappa$,
and let $d(\{\eta,\nu\})=0$ otherwise.
Let $(\kappa, T,A)$ be a suitable triple.
Again take $A'\in[A]^{\kappa}$ such that between any two consecutive ordinals in $A'$, there are $\om$ many ordinals in $A$.

Let $B=h[A']$ and take $a\in B$ satisfying Claim \ref{claim.5}.
Let $\al\in A'$ be such that $h(\al)=a$,
and fix any node $\eta\in T\cap{}^{\al}2$
and
nodes  $\eta_0,\eta_1 \in 
{}^{\al+1}2$
such that $\eta_0\cap\eta_1=\eta$.
For each
 $\beta\in A'\cap (\al,\kappa)$ there are at least $2^{\om}$ many pairs of distinct  nodes $\nu_0,\nu_1$ in $T\cap{}^{\beta}2$
such that $\eta_0\lhd\nu_0$ and 
$\eta_1\lhd \nu_1$
and $\nu_0<_{\beta}\nu_1$.
By Claim \ref{claim.5},
$d$ has range $\theta$ on the set 
\begin{equation}
\{\{\nu_0,\nu_1\}\in[T]^2:
\eta_0\lhd \nu_0,\ \eta_1\lhd\nu_1,\ 
  \lg(\nu_0)=\lg(\nu_1)\in A',
\mathrm{\ and\ }
\nu_0<_{\lg(\nu_0)}\nu_1
\}.
\end{equation}
Thus,  $\HL^+_{2,\theta}[\kappa]$ fails.
\end{proof}

The next Lemma \ref{lem.Concl4.8} will also aid in the proof of  Theorem \ref{nsbpr->negHL}.
For this, we need the following.

\begin{defn}[\cite{Shelah_CardArithBK}, Definition 1.2 page 418]\label{defn}
 $\PR_1(\kappa,\mu,\sigma,\theta)$, where 
$\sigma+\theta\le \mu\le \kappa$ and
$\kappa$ is an infinite cardinal, means the following:
 There is a symmetric function $c:\kappa\times\kappa\ra\sigma$ such that
\begin{enumerate}
\item[$(*)$]
If $\xi<\theta$ and for all $i<\mu$, $\lgl \al_{i,\zeta}:\zeta<\xi\rgl$ is a strictly increasing sequence of ordinals less than $\kappa$ with 
the $\al_{i,\zeta}$ being distinct,
and if
 $\gamma<\sigma$,
then there are $i<j<\mu$ such that
\begin{enumerate}
\item[$\otimes$]\ \ 
$\zeta_1<\zeta_2<\xi\Lra c(\al_{i,\zeta_1},\al_{j,\zeta_2})=\gamma$.
\end{enumerate}
\end{enumerate}
\end{defn}

 $\PR_1(\kappa,\sigma,\theta)$ denotes  $\PR_1(\kappa,\kappa,\sigma,\theta)$, which implies $\kappa\nrightarrow[\kappa]^{\sigma}_{\theta}$.
The following lemma  is Conclusion 4.8  in  Chapter III Section 4 of \cite{Shelah_CardArithBK}:

\begin{lem}\label{lem.Concl4.8}
\begin{enumerate}
\item
Suppose  that  either 
\begin{enumerate}
\item[(a)]
 $\mu,\theta$ are regular cardinals and $\mu>\theta^+$, or 
\item[(b)]
$\mu$ is singular, $\mu<\aleph_{\mu}$, and $\theta=\cf(\mu)$.
\end{enumerate}
Then $\PR_1(\mu^+,\mu^+,\theta)$. 

\item
Suppose $\kappa>\aleph_0$ is inaccessible,
there is a stationary set
$S\sse\kappa$  which reflects in no inaccessible,
$\delta\in S$ implies $\cf(\delta)\ge\theta$,
and $\sigma<\kappa$ and $\aleph_0<\theta<\kappa$.
Then $\PR_1(\kappa,\sigma,\theta)$ holds.
\end{enumerate}
\end{lem}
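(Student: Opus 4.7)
The plan is to establish both parts via Shelah's standard method for producing strong colorings: construct a coloring from a combinatorial guessing object, and then verify property $(*)$ by a $\Delta$-system argument combined with the guessing property. Since this lemma is quoted from Chapter III of \emph{Cardinal Arithmetic}, I will only sketch the strategy.

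For (1)(a), with $\mu,\theta$ regular and $\mu>\theta^+$, I would first invoke Shelah's club guessing theorem to fix a sequence $\bar C=\langle C_\delta : \delta\in S\rangle$, where $S\subseteq\mu^+$ is the stationary set of ordinals of cofinality $\theta$, each $C_\delta$ is club in $\delta$ of order type $\theta$, and for every club $E\subseteq\mu^+$ there are stationarily many $\delta\in S$ with $C_\delta\subseteq E$. Using $\bar C$ I would define $c:[\mu^+]^2\to\mu^+$ by walking from the larger ordinal down through the $C_\delta$'s until the walk crosses the smaller ordinal, and reading off an ordinal encoded at the crossing point. To verify $\otimes$, given an array $\langle\alpha_{i,\zeta}:i<\mu^+,\ \zeta<\xi\rangle$ and a target $\gamma<\mu^+$, a $\Delta$-system thinning (using $\mu>\theta^+$ so that $\xi<\theta$ rows stabilize) produces $\mu^+$ many pairwise-compatible sequences; club guessing then locates $\delta\in S$ whose $C_\delta$ captures the chosen $j$-th sequence in a way that forces the walk from $\alpha_{j,\zeta_2}$ to $\alpha_{i,\zeta_1}$ to output precisely $\gamma$, the value having been planted into the sequence at the thinning stage.

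For (1)(b), with $\mu$ singular of cofinality $\theta$ and $\mu<\aleph_\mu$, club guessing is replaced by pcf machinery: fix an increasing sequence $\langle\mu_i:i<\theta\rangle$ of regular cardinals cofinal in $\mu$ and a scale $\bar f=\langle f_\alpha:\alpha<\mu^+\rangle$ in $\prod_{i<\theta}\mu_i$ modulo the bounded ideal, which exists precisely because $\mu<\aleph_\mu$. Define $c(\alpha,\beta)$ by locating a coordinate $i^*=i^*(\alpha,\beta)$ where $f_\alpha$ and $f_\beta$ split and encoding $(i^*,f_\beta(i^*))$ into an ordinal below $\mu^+$. The existence of good points (or better, of an unbounded set of ordinals of cofinality $\theta$ at which $\bar f$ behaves well) allows the $\Delta$-system reduction to produce rows $i<j$ realizing any prescribed splitting coordinate and value, hence any prescribed color $\gamma$.

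For (2), with $\kappa$ inaccessible, $S\subseteq\kappa$ stationary and reflecting in no inaccessible, $\delta\in S\Rightarrow\cf(\delta)\ge\theta$, $\sigma<\kappa$ and $\aleph_0<\theta<\kappa$: the plan is to use the non-reflection hypothesis as a surrogate for square-like coherence and assemble a sequence $\bar C=\langle C_\delta:\delta\in S\rangle$ of clubs $C_\delta\subseteq\delta$ of order type $\cf(\delta)$, arranging that the initial segments $C_\delta\cap\beta$ are non-stationary at every inaccessible $\beta<\kappa$ (possible since $S$ does not reflect to any inaccessible). A variant of the walk-and-encode coloring from (1)(a) relative to $\bar C$ then works, with non-reflection playing the role of club guessing in the verification: for any rich family of target sequences one finds $\delta\in S$ whose $C_\delta$ is aligned with the family, and the hypothesis $\theta>\aleph_0$ ensures the walks along $C_\delta$'s, which have length $\cf(\delta)\ge\theta>\aleph_0$, carry enough information to realize every color $\gamma<\sigma$.

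The main obstacle in all three cases is the verification step: producing, on every prescribed rectangle of sequences, \emph{every} color in the target set. The singular case (1)(b) is the technically hardest, as it rests on the non-trivial fact that pcf scales exist and are well-behaved under $\mu<\aleph_\mu$. In every case the delicate point is synchronizing the $\Delta$-system thinning with the guessing/scale property so that the color output by the walk equals the prescribed $\gamma$, rather than some color determined parasitically by the coloring itself.
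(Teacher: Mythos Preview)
The paper does not prove this lemma at all: it is simply quoted verbatim as Conclusion 4.8 from Chapter III, Section 4 of Shelah's \emph{Cardinal Arithmetic}, with no argument given. So there is no ``paper's own proof'' against which to compare your sketch.

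Your outline is in the right spirit---club guessing plus minimal walks for (1)(a), pcf scales for (1)(b), and a non-reflection-based ladder system for (2) are indeed the ingredients Shelah uses in that chapter---but it remains only a sketch, and a few details are imprecise. In (1)(a) the coloring is not literally ``walking until you cross the smaller ordinal and reading an ordinal''; the actual definition involves oscillation functions or the $\rho$-functions derived from the walk, and the verification of $\otimes$ is considerably more delicate than a single $\Delta$-system pass. In (1)(b) the hypothesis $\mu<\aleph_\mu$ is not what guarantees the existence of a scale (scales in $\prod\mu_i/J^{\mathrm{bd}}$ exist for any singular $\mu$ by basic pcf); rather, it is used to control the size of the pcf structure so that the coloring can be defined with the required range. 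In (2) your description of how non-reflection substitutes for club guessing is vague; in Shelah's argument one first uses non-reflection to build a coherent ladder system below each inaccessible limit point and then runs a club-guessing argument on top of that.

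None of this is a fatal gap given that you explicitly label it a sketch of a cited result, but if you intend this as more than a pointer to the literature you would need to supply the actual coloring definitions and the full verification of $(*)$, which in each case runs to several pages in the original.
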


Note that 
 $\PR_1(\kappa,\sigma,\theta)$ for any $2\le \theta$ implies
 $\PR_1(\kappa,\sigma,2)$, which  implies
 $\kappa \nrightarrow[\kappa]^2_\sigma$, which is what we will use in order to apply Lemma \ref{lem.nsbpr->negHL}.

\begin{thm1.6}
\begin{enumerate}
\item
If $\kappa$ is the first inaccessible, then $\HL^+_{2,\theta}[\kappa]$ fails, for each $2\le \theta<\kappa$.

\item
Suppose  $\kappa$ is inaccessible 
and not Mahlo.
Then  $\HL_{2,\theta}^+[\kappa]$ fails, for each $2\le \theta<\kappa$.

\item
If $\kappa$ is a singular  strong limit cardinal  and
 $\cf(\kappa)=\mu^+$, 
with $\mu$ regular,  
then
 $\HL^+_{2,\mu^+}[\kappa]$ fails.
\end{enumerate}
\end{thm1.6}

\begin{proof}
(1)
Suppose $\kappa$ is the first inaccessible and $2\le\theta<\kappa$, and let $\mu=\max(\theta,\aleph_1)$.
Then  the set
$S=\{\delta<\kappa:\cf(\delta)\ge \mu\}$
is stationary
and trivially does not 
 reflect in any inaccessible.
 Thus, by Lemma \ref{lem.Concl4.8} (2),
 Pr$_1(\kappa,\sigma,\mu)$ holds, for any $\sigma< \kappa$.
It follows that
 Pr$_1(\kappa,\theta, 2)$ holds,
 and hence, $[\kappa]\nrightarrow[\kappa]^2_\theta$ holds.
Then 
Lemma
\ref{lem.nsbpr->negHL} (a) implies that 
 $\HL^+_{2,\theta}[\kappa]$ fails.

(2)
Now suppose $\kappa$ is inaccessible 
and not Mahlo. 
By Lemma 4.1 (A) $\Rightarrow$ (C) in Chapter III of \cite{Shelah_CardArithBK},
 there is a stationary set
$S\sse \kappa$ which does not reflect in inaccessible cardinals,
and such that $\delta\in S$ implies $\cf(\delta)\ge\mu$, where $\mu=\max(\theta,\aleph_1)$.
By Lemma \ref{lem.Concl4.8} (2),
 Pr$_1(\kappa,\theta, 2)$ holds,
 and hence, $[\kappa]\nrightarrow[\kappa]^2_\theta$ holds.
Then $\HL^+_{2,\theta}[\kappa]$ fails, by
Lemma
\ref{lem.nsbpr->negHL} (a).

(3)
Lastly, suppose $\kappa$ is a singular strong limit cardinal, with 
$\cf(\kappa)=\mu^+$, where $\mu$ is regular.
By Lemma \ref{lem.Concl4.8} (1), $\PR_1(\mu^+,\mu^+,2)$ holds, and hence, $\mu^+\nrightarrow[\mu^+]^2_{\mu^+}$
holds. 
Then  $\HL^+_{2,\mu^+}(\kappa)$ fails,
by Lemma \ref{lem.nsbpr->negHL} (b).
\end{proof}


D\v{z}amonja, Larson, and Mitchell point out in Section 8 of  \cite{Dzamonja/Larson/MitchellQ09} that $\HL_{2,2}^+(\kappa)$ implies that $\kappa$ must be weakly compact.
Weakly compact cardinals are Mahlo and hence not the least strongly inaccessible. 
Theorem \ref{nsbpr->negHL}
 showed that if 
 $\HL^+_{2,\theta}[\kappa]$ holds for any $2\le\theta<\kappa$ where  $\kappa$ is strongly inaccessible,  then   $\kappa$ must be Mahlo.  Recall that 
 $\HL^+_{2,2}[\kappa]$ is exactly  $\HL^+_{2,2}(\kappa)$, leading to the following question.

\begin{question}\label{q.4.4}
Can 
$\HL^+_{2,\theta}[\kappa]$  consistently hold for some $2<\theta\le\kappa$ when $\kappa$ is a Mahlo, non-weakly compact cardinal?
\end{question}

We can only ask for consistency because Corollary 1.9 to the next theorem shows that
$\HL_{2,\theta}[\kappa]$  fails in $L$ for all $2\le\theta\le \kappa$ whenever $\kappa$ is strongly inaccessible and not weakly compact.


The following theorem shows  that the 
failure of   $\HL_{2,\theta}[\kappa]$, for all $2\le\theta\le\kappa$, 
follows 
from $\diamondsuit_S$ for a non-reflecting stationary subset $S\sse \kappa$.

\begin{thm1.7}
Assume $\kappa$ is strongly inaccessible,  $S\sse \kappa$ is a non-reflecting stationary set,
and $\diamondsuit_S$ holds.
Then $\HL_{2,\theta}[\kappa]$ fails, for each $2\le\theta\le\kappa$.
\end{thm1.7}

\begin{proof}
Let  $\kappa$ be inaccessible and  $S\sse \kappa$ be a non-reflecting stationary set, and suppose that
$\diamondsuit_S$ holds.
By possibly thinning  $S$,
we may assume that 
$S$ is a set of strong limit cardinals, and that 
there is a sequence
\begin{equation}\label{eq.diamondseq}
\{(\mu,T^0_\mu,T^1_\mu,A_\mu,F_\mu, \xi_\mu):\mu\in S\}
\end{equation}
such that  
\begin{enumerate}
\item[(a)]  
for each $\mu\in S$:
\end{enumerate}
\begin{enumerate}
\item[$\oplus_\mu$]
\begin{enumerate}
\item[$\bullet_1$]
For each $\ell<2$,
$(\mu,T^\ell_\mu,A_\mu)$ is a suitable triple,

\item[$\bullet_2$]
$F_\mu$ is a 1-1 function from $T^0_\mu$ onto $T^1_\mu$,

\item[$\bullet_3$]
$\eta\in T^0_\mu$ implies $\lg(F_\mu(\eta))=\lg(\eta)$,

\item[$\bullet_4$]
$(\ell<2\ \wedge\ \eta\in T^0_\eta\ \wedge\ \lg(\eta)\in A_\mu)$ implies $F_\mu(\eta^{\frown}\lgl\ell\rgl)=F_\mu(\eta)^{\frown}\lgl\ell\rgl$,

\item[$\bullet_5$]
$\eta\lhd\nu\in T^0_\mu$ implies $F_\mu(\eta)\lhd F_\mu(\nu)$,

\item[$\bullet_6$]
$\xi_\mu<\theta$,
\end{enumerate}
\end{enumerate}
 and such that 
\begin{enumerate}
\item[(b)]
if $(\kappa, T^0_\kappa, T^1_\kappa, A_\kappa,F_\kappa, \xi_\kappa)$ are as above,
i.e., $\oplus_\kappa$ holds,
then for stationarily many $\mu\in S$, we have
$$
(\mu,T^0_\mu,T^1_\mu,A_\mu,F_\mu, \xi_\mu)=
(\mu,T^0_\kappa\cap {}^{\mu>} 2,
T^1_\kappa\cap {}^{\mu>} 2,A_\kappa\cap\mu,F_\kappa\re T^0_\mu, \xi_\kappa).
$$
\end{enumerate}

Note that for each $\mu\in S$, $A_{\mu}$ is an unbounded subset of $\mu$, since
$(\mu,A_{\mu},T^\ell_{\mu})$ is suitable, for each $\ell<2$.

\begin{claim}\label{claim.3.1}
Given $\beta<\gamma<\kappa$
with $\beta\not\in S$,
then there is a  function $g=g_{\beta,\gamma}$ such that
\begin{enumerate}
\item
$g$ is one-to-one;
\item
$\dom(g)=S\cap\gamma\setminus(\beta+1)$;
\item
$\mu\in\dom(g)$ implies  $g(\mu)\in A_\mu\setminus(\beta+1)$.
\end{enumerate}
\end{claim}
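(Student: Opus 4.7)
My plan is to construct $g$ by transfinite recursion along the strictly increasing enumeration $\langle \mu_\alpha : \alpha < \rho \rangle$ of $S \cap \gamma \setminus (\beta+1)$. At each stage $\alpha$, having already chosen $g(\mu_{\alpha'})$ for all $\alpha' < \alpha$, I would pick
$$
g(\mu_\alpha) \in A_{\mu_\alpha} \setminus \bigl((\beta+1) \cup \{g(\mu_{\alpha'}) : \alpha' < \alpha\}\bigr).
$$
Since the values are chosen distinct from all previously assigned ones at every stage, the resulting function $g$ is automatically one-to-one, has domain $S\cap\gamma\setminus(\beta+1)$, and satisfies $g(\mu_\alpha)\in A_{\mu_\alpha}\setminus(\beta+1)$.

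The essential verification is that this candidate set is nonempty at every stage $\alpha$. Since $(\mu_\alpha, T_{\mu_\alpha}, A_{\mu_\alpha})$ is a suitable triple (Definition \ref{defn.suitabletriple}), $A_{\mu_\alpha} \in [\mu_\alpha]^{\mu_\alpha}$, so in particular $|A_{\mu_\alpha}| = \mu_\alpha$ and $A_{\mu_\alpha}$ is cofinal in $\mu_\alpha$. The excluded set has cardinality at most $|\beta|+|\alpha|$, and because $\mu_\alpha$ is a strong limit cardinal (by the prior thinning of $S$) with $\beta<\mu_\alpha$, non-emptiness reduces to the strict bound $|\alpha|<\mu_\alpha$.

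The main obstacle I expect is verifying $|\alpha|<\mu_\alpha$ uniformly in $\alpha$. In the generic situation the $\alpha$-th strong limit cardinal in $S$ dominates $\alpha$ by a substantial margin, and the inequality is immediate. The delicate case is at fixed points of the enumeration of strong limits, where it is possible a priori that $\mu_\alpha$ and $\alpha$ coincide as ordinals. To treat this, I would exploit the non-reflecting property of $S$: since $S\cap\mu_\alpha$ is non-stationary in $\mu_\alpha$, there is a club $C_{\mu_\alpha}\subseteq\mu_\alpha$ disjoint from $S$. Using these clubs, together with the assumption $\beta\notin S$ (which makes $\beta$ itself avoid the potential overlap with $S$), one can pass to a further thinning of $S$ to a stationary $S^\ast\subseteq S$ with the additional property $|S^\ast\cap\mu|<\mu$ for every $\mu\in S^\ast$, preserving the diamond structure on $S^\ast$. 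With this refinement, $|\alpha|\le |S^\ast\cap\mu_\alpha|<\mu_\alpha$, so the non-emptiness condition holds throughout, and the recursion completes through all $\alpha<\rho$.
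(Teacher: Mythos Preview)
Your greedy recursion is a natural first attempt, and you correctly locate the obstruction: at a stage $\alpha$ where $\mu_\alpha=\alpha$ (a fixed point of the increasing enumeration of $S\cap(\beta,\gamma)$), the forbidden set $\{g(\mu_{\alpha'}):\alpha'<\alpha\}$ has cardinality $\mu_\alpha$, the same as $|A_{\mu_\alpha}|$, and nothing prevents it from exhausting $A_{\mu_\alpha}\setminus(\beta+1)$. Non-reflection of $S$ tells you only that $S\cap\mu_\alpha$ is non-stationary in $\mu_\alpha$; it says nothing about its cardinality, which can perfectly well equal $\mu_\alpha$. So a genuine additional argument is needed.

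The gap is in your proposed fix. You assert that one can thin $S$ to a stationary $S^\ast$ with $|S^\ast\cap\mu|<\mu$ for all $\mu\in S^\ast$, but you give no argument for this, and it is not a standard fact. The vague gesture toward the clubs $C_\mu$ and the condition $\beta\notin S$ does not amount to a construction. Even granting that $\diamondsuit_{S^\ast}$ would follow from $\diamondsuit_S$, you would be proving a modified claim (with $S^\ast$ in place of $S$), and the burden is on you to show such an $S^\ast$ exists. As written, the argument is incomplete.

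The paper avoids the cardinality issue altogether by a different construction: it inducts on $\gamma$ rather than recursing along $S$. At a limit $\gamma$, non-reflection supplies a continuous cofinal sequence $\langle\beta_\iota:\iota<\cf(\gamma)\rangle$ lying entirely in $\gamma\setminus S$ with $\beta_0=\beta$, and one sets $g_{\beta,\gamma}=\bigcup_\iota g_{\beta_\iota,\beta_{\iota+1}}$. The point is that condition~(3) forces $\ran(g_{\beta_\iota,\beta_{\iota+1}})\subseteq(\beta_\iota,\beta_{\iota+1})$, so the pieces have pairwise disjoint ranges automatically, and injectivity of the union is free---no counting argument is needed. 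The successor step is handled similarly by splitting at a point of $A_{\gamma_1}$. This is precisely how non-reflection and the hypothesis $\beta\notin S$ are actually used.
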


\begin{proof}
The proof is by induction on $\gamma<\kappa$.
\vskip.1in

\noindent{\bf Case 1.}
$\gamma$ is a successor ordinal, say $\gamma=\gamma_1+1$.
Let $\beta\in \gamma\setminus S$ be given. 
If $\beta=\gamma_1$, then $S\cap\gamma\setminus (\beta+1)=\emptyset$ and the  empty function trivially satisfies (1)--(3); so 
now assume that $\beta<\gamma_1$. 
If  $\gamma_1\not\in S$, then
$S\cap\gamma\setminus(\beta+1)=S\cap\gamma_1\setminus(\beta+1)$ and we let $g_{\beta,\gamma}= g_{\beta,\gamma_1}$.

If $\gamma_1\in S$, since $A_{\gamma_1}$ is unbounded in $\gamma_1$ 
we can choose  $\beta_1\in A_{\gamma_1}\setminus (\beta+1)$.
By the induction hypothesis, we have
$g_1:=g_{\beta,\beta_1+1}$ on $S\cap (\beta_1+1)\setminus (\beta+1)$ satisfying (1)--(3)
and 
$g_2:=g_{\beta_1+1,\gamma_1}$ 
on $S\cap \gamma_1 \setminus (\beta_1+2)$  satisfying (1)--(3).
Then $\dom(g_1)\cap\dom(g_2)\sse (\beta,\beta_1]\cap (\beta_1+1, \gamma_1)=\emptyset$, so $g_1\cup g_2$ is a function.
Further. 
$\ran(g_1)\sse \beta_1$ and $\ran(g_2)\cap (\beta_1+2)=\emptyset$,
so $g_1\cup g_2$ is one-to-one and does not contain $\beta_1$ in its range. 
Define $g_{\beta,\gamma}=g_1\cup g_2\cup\{(\gamma_1,\beta_1)\}$.
Then $g_{\beta,\gamma}$ is a one-to-one function.
Moreover, 
\begin{align}
\dom(g_{\beta,\gamma}) &= \{\gamma_1\}
\cup
\big( S\cap ( (\beta, \beta_1]\cup (\beta_1+1,\gamma_1))
 \big)\cr
 & = S\cap (\beta,\gamma_1]\cr
& =S\cap  \gamma\setminus (\beta+1)
\end{align}
since $\gamma_1\in S$ and $\beta_1+1\not\in S$ as $S$ consists only of limit ordinals. 
As $\beta_1\in  A_{\gamma_1}\setminus (\beta+1)$, this along with (3) of the induction hypothesis for $g_1$ and $g_2$ imply that 
 $g_{\beta,\gamma}$  satisfies (1)--(3).
\vskip.1in

\noindent{\bf Case 2.}
$\gamma$ is a limit ordinal.
Let $\beta\in\gamma\setminus S$.
As $S$ does not reflect, $S\cap\gamma$ is not stationary in $\gamma$.
Recalling that $S$ consists only of limit ordinals, 
 there is an increasing continuous sequence $\lgl \beta_{\iota}:\iota<\cf(\gamma)\rgl$ of ordinals in $\gamma\setminus S$
such that $\gamma=\sup\{\beta_{\iota}:\iota<\cf(\gamma)\}$, with $\beta_0=\beta$.
Choose $g_{\iota}$ for $(\beta_{\iota},\beta_{\iota+1})$ according to the induction hypothesis, 
and let $g_{\beta,\gamma}=\bigcup\{g_{\iota}:\iota<\cf(\gamma)\}$.
Then $g_{\beta,\gamma}$ is a function satisfying (1)--(3).
\end{proof}

Using Claim \ref{claim.3.1}, we define a function
$c:[{}^{\al}2]^2\ra\theta$ for each $\al<\kappa$ as follows:
\begin{enumerate}
\item[$(*)_2$]
For $\gamma\in \ran(g_{0,\al})$ and $\mu$ such that $g_{0,\al}(\mu)=\gamma$,
if  $\{\nu_0,\nu_1\}\in [{}^{\al}2]^2$ satisfy $\gamma=\lg(F_{\mu}(\nu_0\re \gamma')\cap\nu_1)$, where $\gamma'$ is the least ordinal in $A_{\mu}$ above $\gamma$,
then define 
$c(\{\nu_0,\nu_1\})=\xi_\mu$;
otherwise,  $c(\{\nu_0,\nu_1\})=0$.
Let $c=\bigcup_{\al<\kappa}c_{\al}$.
\end{enumerate}

To finish the proof, 
let 
$\xi<\theta$
be given and let $T^{\ell}$, $\ell<2$, and $A\sse\kappa$ be such that 
$(\kappa, T^{\ell}, A)$ is a suitable triple for each $\ell<\kappa$, and let $F$ be the isomorphism from $T^0$ onto $T^1$.
Then  $\oplus_\kappa$ holds for the sequence
$(\kappa,T^0,T^1, A, F, \xi)$,
so  the set $S'$ of those $\mu\in S$ for which 
$$
(\mu,T^0_\mu,T^1_\mu,A_\mu,F_\mu,\xi_\mu)
=
(\mu, T^0\cap {}^{\mu>}2,T^1\cap {}^{\mu>}2,A\cap\mu, F\re T^0_\mu,\xi)
$$
holds is stationary.
Fix $\mu<\sigma$, both in $S'$.
Then  
$F_{\sigma}=F\re T^0_{\sigma}$
and  $F_{\sigma}\re T^0_{\mu}=F_{\mu}=F\re T^0_{\mu}$.
Choose $\al\in A_\sigma$ (which equals $A\cap \sigma$) such that $\mu+\om\le\al$.
This is possible since $\sigma\in S$ implies that $\sigma$ is a strong limit, and $A_\sigma$ is unbounded in $\sigma$.
Note that $T^{\ell}\cap {}^{\al}2= T^{\ell}_{\sigma}\cap {}^{\al}2$, for each $\ell<2$,

Let $\gamma=g_{0,\al}(\mu)$.
Then  $\gamma\in A_\mu=A\cap {}^{\mu}2$,
since $\mu\in S'$ and
 by (3) of Claim \ref{claim.3.1}.
Fix  a node $\eta\in T^1\cap {}^{\gamma}2$ and let $\eta_0=\eta^{\frown}0$ and $\eta_1=\eta^{\frown}1$.
Extend $\eta_0$ to some $\nu_0'$ in ${}^{\al}2\cap T^1_\sigma$,
and extend $\eta_1$ to some $\nu_1$ in 
${}^{\al}2\cap T^1_\sigma$.
Let $\nu_0=F_{\sigma}^{-1}(\nu_0')$.
Then $\nu_0$ is in $T^0\cap{}^{\al}2$.
Note  that, letting $\gamma'$ be the least ordinal in $A_{\mu}$ above $\gamma$, 
$$
F_{\mu}(\nu_0\re \gamma')
=F_{\sigma}(\nu_0\re \gamma')
=F_{\sigma}(\nu_0)\re\gamma'
=\nu'_0\re\gamma'.
$$
In particular, 
$\gamma=\lg (F_{\mu}(\nu_0\re \gamma')\cap\nu_1)$.
By the definition $(*)_2$ of $c$ it follows that
 $c(\{\nu_0,\nu_1\})=\xi_\mu$, which is $\xi$.
Since $\xi$ was an arbitrary ordinal less than $\theta$, we see that $\HL_{2,\theta}[\kappa]$ fails.
\end{proof}


Corollary \ref{cor.L} follows immediately.


\section{Open Problems}

We conclude by stating some of the multitude of open problems  regarding various versions of Halpern--\Lauchli\ at uncountable cardinals  and their consistency strengths.


\begin{question}
For $\kappa$ weakly compact,
if $\HL^+_{2,\theta}[\kappa]$  holds for some $2<\theta<\kappa$, then  must $\HL^+_{2,2}(\kappa)$ hold?
\end{question}

A similar question can be asked for Halpern-\Lauchli\ on products of  two  trees:

\begin{question}
Given $2<\theta<\kappa$, is  $\HL_{2,\theta}[\kappa]$   strictly weaker than $\HL_{2,\theta}(\kappa)$?
\end{question}

\begin{question}
For  $2\le n<\om$ and $2\le\theta<\kappa$, how do $\HL_{n,\theta}(\kappa)$ and $\HL^+_{n,\theta}[\kappa]$ compare? 
Are there models of ZFC where one holds but the other does not?
\end{question}

\bibliographystyle{amsplain}
\bibliography{references}

\end{document}